\numberwithin{equation}{section}
\theoremstyle{plain}
\newtheorem{thm}{Theorem}[section]
\newtheorem{prop}[thm]{Proposition}
\theoremstyle{definition}
\newtheorem{defi}[thm]{Definition}
\newtheorem{rem}[thm]{Remark}
\newtheorem{ex}[thm]{Example}
\def\H{{\mathcal L}}
\def\Sp{{\mathcal H}}
\def\s{\ast}
\def\p#1{{\left({#1}\right)}}
\def\ind{\mathbb N}
\def\e[#1]{{\textrm{e}}^{#1}}
\def\Dis{{\mathrm H}^{-\infty}}
\def\Cs{{\mathrm H}}
\def\Rn{{\mathbb R}^n}
\begin{document}

\title[On nonlinear damped wave equations for positive operators]
{On nonlinear damped wave equations for positive operators. I. Discrete spectrum}

\author[Michael Ruzhansky]{Michael Ruzhansky}
\address{
  Michael Ruzhansky:
  \endgraf
  Department of Mathematics
  \endgraf
  Imperial College London
  \endgraf
  180 Queen's Gate, London, SW7 2AZ
  \endgraf
  United Kingdom
  \endgraf
  {\it E-mail address} {\rm m.ruzhansky@imperial.ac.uk}
  }
\author[Niyaz Tokmagambetov]{Niyaz Tokmagambetov}
\address{
  Niyaz Tokmagambetov:
  \endgraf
  al--Farabi Kazakh National University
  \endgraf
  71 al--Farabi ave., Almaty, 050040
  \endgraf
  Kazakhstan,
  \endgraf
   and
  \endgraf
  Institute of Mathematics and Mathematical Modeling
  \endgraf
  125 Pushkin street, Almaty, 050010
  \endgraf
  Kazakhstan
  \endgraf
  {\it E-mail address} {\rm tokmagambetov@math.kz}
 }

\thanks{The authors were supported in parts by the EPSRC
grants EP/K039407/1 and EP/R003025/1, by the Leverhulme Grants RPG-2014-02 and RPG-2017-151, as well as by the MESRK grants "Non--Harmonic Analysis and its applications" and "Non-Linear equations on graded Lie groups" (2018--2020 years) of the Committee of Science, Ministry of Education and Science of the Republic of Kazakhstan.
No new data was collected or generated during the course of research.}

\date{\today}

\subjclass{35B40, 35L05, 35L70, 42A85, 35P10, 44A35.} \keywords{Wave equation, harmonic oscillator, Cauchy problem, positive operator, Gagliardo-Nirenberg inequality, nonlinear equations, dissipation}

\begin{abstract}
In this paper we study a Cauchy problem for the nonlinear damped wave equations for a general positive operator with discrete spectrum. We derive the exponential in time decay of solutions to the linear problem with decay rate depending on the interplay between the bottom of the operator's spectrum and the mass term. Consequently, we prove global in time well-posedness results for semilinear and for more general nonlinear equations with small data. Examples are given for nonlinear damped wave equations for the harmonic oscillator, for the twisted Laplacian (Landau Hamiltonian), and for the Laplacians on compact manifolds.
\end{abstract}

\maketitle

\section{Introduction}

This work is devoted to the analysis of nonlinear damped wave equations for positive operators acting in Hilbert spaces. More precisely, for a densely defined positive operator $\H$ in a separable Hilbert space $\Sp$ we consider the Cauchy problem
\begin{equation}\label{EQ: NoL-01}
\left\{ \begin{split}
\partial_{t}^{2}u(t)+\H u(t)+b\partial_{t}u(t)+m u(t)&=F(u, \partial_{t}u, \H^{1/2}u), \quad t>0,\\
u(0)&=u_{0}\in\Sp, \\
\partial_{t}u(0)&=u_{1}\in\Sp,
\end{split}
\right.
\end{equation}
with the damping term determined by $b>0$ and mass $m\in\mathbb R$.
The main assumption in this paper is that the operator $\H$ has a discrete spectrum
and that the corresponding eigenvectors form an orthonormal basis in $\Sp$.

The main examples of interest for us would be the harmonic oscillator on $\Sp=L^2(\Rn)$:
\begin{equation}\label{EQ:ho}
\H:=-\Delta+|x|^{2}, \,\,\, x\in\mathbb R^{n},
\end{equation}
and the Laplacians, or more general positive elliptic pseudo-differential operators, on $\Sp=L^2(M)$ for compact manifolds $M$, with or without boundary. Of course there are numerous other examples that are covered by this setting, for example the twisted Laplacian (Landau Hamiltonian)
on $\mathbb C^{n}$ given by
$$
\H=\sum_{j=1}^{n}(Z_{j}\bar{Z}_{j}+\bar{Z}_{j}Z_{j}),
$$
with $Z_{j}=\frac{\partial}{\partial z_{j}}+\frac{1}{2}\bar{z}_{j}$ and $\bar{Z}_{j}=-\frac{\partial}{\partial \bar{z}_{j}}+\frac{1}{2}z_{j}$, see Example \ref{EX:LH}, where we also derive the the Gagliardo-Nirenberg inequality for it.

The other important situation occurs when the spectrum of $\H$ is continuous. In that case the analysis relies on rather different methods and this problem will be addressed in the subsequent paper.

The analysis of linear and nonlinear damped wave equations has a long history. In papers \cite{M76, Wahl70} the authors first considered these kind of problems for the Laplacian on $\Rn$. We refer to papers \cite{HKN04, HKN06, HL92, HO04, I04, IMN04, IT05, KU13, Kh13, N04, N03, SW07, O03, O06, R90} in $\Rn$ dealing with damped wave equations under different assumptions, and references therein, where authors study the global solvability of the Cauchy problems for nonlinear wave equations for the Laplace operator with the dissipative term. Also, see \cite{K00, RTY11, W14} for some more abstract settings. For even more references, we refer to a recent survey \cite{IIW17}.
Time-dependent dissipation has been also considered, see e.g. \cite{W06} for regular and \cite{GR15, RT16a} for irregular dissipation in linear problems, respectively.
The global framework for the Fourier analysis generated by a densely defined operator $\H$ on $L^2(M)$ for manifolds $M$ with or without boundary was developed in \cite{RT16, RT16b}.

In Section \ref{SEC:linear} we consider the linear equations and derive the exponential time decay for their solutions. This is done by using the Fourier analysis adapted to the operator $\H$, elements of which we review in the process of the proof. The exponential decay plays a crucial role in the further analysis, in particular allowing the handling of the nonlinear equations to rely mostly on the analysis in Sobolev spaces over $\H$. Such decay is achieved by the fact that the operator $\H$ has a discrete positive spectrum. In the case of continuous spectrum, more delicate $L^p$- methods are needed, and these will appear elsewhere in the subsequent analysis for that setting.

Partial differential equations in general Hilbert (and also Banach) spaces have been considered in many papers as well, both linear and nonlinear. For example, see \cite{EFNT94} for an extensive analysis in terms of the dynamical systems behaviour, and \cite{Zua90} for related analysis.
Linear wave equations in Hilbert spaces with irregular coefficients have been recently considered by the authors in \cite{RT17}.

In Section \ref{SEC:semlinear} we consider the case of semilinear damped wave equations of the form
\begin{equation}\label{EQ: NoL-01i}
\left\{ \begin{split}
\partial_{t}^{2}u(t)+\H u(t)+b\partial_{t}u(t)+m u(t)&=f(u), \quad t>0,\\
u(0)&=u_{0}\in\Sp, \\
\partial_{t}u(0)&=u_{1}\in\Sp,
\end{split}
\right.
\end{equation}
under the assumption that
$f$ satisfies the properties
\begin{equation} \label{PR: f-007i}
\left\{
\begin{split}
f(0) & =0, \\
|f(u)-f(v)| & \leq C (|u|^{p-1}+|v|^{p-1})|u-v|,
\end{split}
\right.
\end{equation}
for $u, v\in\mathbb R$.
If $\Sp=L^2$, then an example of $f$ satisfying \eqref{PR: f-007i} is given by
$$f(u)=\mu |u|^{p-1} u,$$
for $p>1$ and $\mu\in\mathbb R$ or, more generally, by
differentiable functions $f$ such that $$|f'(u)|\leq C|u|^{p-1}.$$

In Section \ref{S: n+2} we consider a general case, namely, we deal with the nonlinear equation \eqref{EQ: NoL-01} for general nonlinearity $$F=F(u, \partial_{t}u, \H^{1/2} u)$$ satisfying an analogue of the Gagliardo-Nirenberg inequalities in the Sobolev space associated to $\H$.
This condition is formulated in \eqref{PR: f-02} and some examples for it are given in \eqref{EQ:exnon}.

In Section \ref{SEC:higher} we consider more general nonlinearities of the form
\begin{equation}\label{EQ:F-gen-i}
F_l=F_l(u,\{\partial^j u\}_{j=1}^l, \{{\mathcal L}^{j/2} u\}_{j=1}^{l}).
\end{equation}
for $F_l:{\mathbb C}^{2l+1}\to {\mathbb C}$, for any $l\in\mathbb N$. The proof of the global in time well-posedness in this case is an extension of the proof in Section \ref{S: n+2}, so we only very briefly indicate the differences there. A different feature here is that the smallness is required in higher regularity Sobolev spaces, but  only to make sense of the higher order derivatives entering the nonlinearity $F_l$.

From the physical point of view it is natural to assume that $b>0$ and $m\geq 0$. However, from the point of view of the well-posedness we may allow $m$ to be negative. In this case, there appears an interplay between $b, m$ and the bottom $\lambda_0$ of the spectrum of $\H$.
The global in time decay properties of solutions to wave equation with negative mass in $\Rn$ were derived in \cite{RSm}.

The inclusion of the mass term does allow us to derive certain results even in the case when the bottom of the spectrum of $\H$ is zero. For example, when $\Sp=L^2(M)$ for a compact manifold $M$ without boundary, and $\H$ being the positive Laplacian on $M$, the operator $\H$ has a zero eigenvalue. In this case, if the Cauchy data are constant, in the case of $m=0$ the solution to the linear problem allows also constants, so that there is no decay in time and no dispersion even for $b>0$. To avoid this kind of (trivial) problems, it will be convenient to assume that $\lambda_0+m>0$, with $\lambda_0\geq 0$ and $m\in\mathbb R$. Otherwise, to summarise and collect our assumptions for this paper, we will be assuming throughout that

\begin{center}
\em The positive operator $\H$ has a discrete spectrum $\{\lambda_j\}_{j\in\mathbb N}$ with
$\lambda_{0}:=\inf\limits_{j\in\mathbb N}\lambda_j\geq 0$,
and the corresponding eigenvectors form an orthonormal basis in $\Sp$.

Moreover, we assume that $b>0$, $m\in\mathbb R$, and $\lambda_0+m>0$.
\end{center}

For example, in our setting, assuming that $\lambda_0+m>0$, for appropriate indices $\alpha, \beta$, we then have the estimates
\begin{equation*}\label{Est-0-01i}
\|\partial_{t}^{\alpha}\H^{\beta}u(t)\|_{\Sp}\lesssim\e[-\frac{b}{2}t] \,\,
(\|u_{0}\|_{H_{\H}^{\alpha+2\beta}}+\|u_{1}\|_{H_{\H}^{\alpha-1+2\beta}}),
\; \textrm{ for } 0<b< 2\sqrt{\lambda_{0}+m},
\end{equation*}
\begin{equation*}\label{Est-0-01bi}
\|\partial_{t}^{\alpha}\H^{\beta}u(t)\|_{\Sp}\lesssim (1+t)\e[-\frac{b}{2}t] (\|u_{0}\|_{H_{\H}^{\alpha+2\beta}}+\|u_{1}\|_{H_{\H}^{\alpha-1+2\beta}}),
\; \textrm{ for } b=2\sqrt{\lambda_{0}+m},
\end{equation*}
and
\begin{equation*}\label{Est-0-02i}
\|\partial_{t}^{\alpha}\H^{\beta}u(t)\|_{\Sp}\lesssim\e[-(\frac{b}{2}-\sqrt{\frac{b^{2}}{4}-\lambda_{0}-m})t] (\|u_{0}\|_{H_{\H}^{\alpha+2\beta}}+\|u_{1}\|_{H_{\H}^{\alpha-1+2\beta}}),
\; \textrm{ for } 2\sqrt{\lambda_{0}+m}<b,
\end{equation*}
for solutions of linear and nonlinear equations, modulo small modifications -- see the exact statements later on, e.g. in Proposition \ref{LEM: Est-01}.

\smallskip
Throughout this paper we will use the notation $\lesssim$ to not write constants (which are not depending on the main parameters) in estimates. We also assume $\mathbb N=\{1,2,\ldots\}$ and $\mathbb N_0=\mathbb N\cup\{0\}.$

\smallskip
The authors would like to thank both referees for the useful and constructive comments.

\section{Dissipative wave equation}
\label{SEC:linear}

In this section we derive energy estimates for the linear damped wave equation
\begin{equation}\label{CPa-01}
\left\{ \begin{split}
\partial_{t}^{2}u(t)+\H u(t)+b\partial_{t}u(t)+m u(t)&=0, \quad t>0, \\
u(0)&=u_{0}\in\Sp, \\
\partial_{t}u(0)&=u_{1}\in\Sp,
\end{split}
\right.
\end{equation}
for some dissipation constant $b>0$ and some mass constant $m$.

The time decay rates will depend on the following parameter associated with $\H$. Let $\{\lambda_j\}_{j=1}^{\infty}$ be the set of eigenvalues of $\H$. Since $\H$ is a positive operator, all eigenvalues are also positive. Then we call
\begin{equation}\label{EQ: Eigen-Parameter}
\lambda_{0}:=\inf\limits_{j\in\mathbb N}\lambda_j
\end{equation}
the bottom of the spectrum of $\H$.
In this paper we make the only assumption that the operator is positive, i.e. that 
\begin{equation}\label{EQ:lam0}
\lambda_{0}\geq 0.
\end{equation}

To obtain the time decay rate for solutions $u(t)$ of \eqref{CPa-01} we first derive the representation of solutions for \eqref{CPa-01} based on the suitable Fourier analysis adapted to the operator $\H$.

For this, we first recall the necessary elements of the global Fourier analysis
that has been developed in \cite{RT16} (also see \cite{RT16b}, and its applications to the spectral properties of operators in \cite{DRT16}). Since the operator $\H$ is self-adjoint, the construction of
\cite{RT16} is considerably simplified. We now give its brief review adapting it to the present setting.

Let $\Cs_{\H}^{\infty}:={\rm Dom}({\H}^{\infty})$ be the space of test
functions for ${\H}$ which we define as
$$
{\rm Dom}({\H}^{\infty}):=\bigcap_{k=1}^{\infty}{\rm Dom}({\H}^{k}),
$$
where ${\rm Dom}({\H}^{k})$ is the domain of the operator ${\H}^{k}$, in turn defined as
$$
{\rm Dom}({\H}^{k}):=\{f\in\Sp: \,\,\, {\H}^{j}f\in {\rm Dom}({\H}), \,\,\, j=0,
\,1, \, 2, \ldots,
k-1\}.
$$
The Fr\'echet topology of $\Cs_{{\H}}^{\infty}$ is given by the family of semi-norms
\begin{equation}\label{EQ:L-top}
\|\varphi\|_{\Cs^{k}_{{\H}}}:=\max_{j\leq k}
\|{\H}^{j}\varphi\|_{\Sp}, \quad k\in\mathbb N_0,
\; \varphi\in\Cs_{{\H}}^{\infty}.
\end{equation}
The space $$\Dis_{{\H}}:={\mathscr L}(\Cs_{\H}^{\infty},
\mathbb C)$$ of linear continuous functionals on
$\Cs_{\H}^{\infty}$ is called the space of
${\H}$-distributions.
For
$w\in\Dis_{{\H}}$ and $\varphi\in\Cs_{\H}^{\infty}$,
we shall write
$$
w(\varphi)=\langle w, \varphi\rangle.
$$
For any $\psi\in\Cs_{\H}^{\infty}$, the functional
$$
\Cs_{\H}^{\infty}\ni \varphi\mapsto (\varphi,\psi)
$$
is an ${\H}$-distribution, which gives an embedding $\psi\in\Cs_{{\H}}^{\infty}\hookrightarrow\Dis_{\H}$.
Let $\mathcal S(\ind)$ denote the space of rapidly decaying
functions $\varphi:\ind\rightarrow\mathbb C$. That is,
$\varphi\in\mathcal S(\ind)$ if for any $N<\infty$ there
exists a constant $C_{\varphi, N}$ such that
$$
|\varphi(\xi)|\leq C_{\varphi, m}\langle\xi\rangle^{-N}
$$
holds for all $\xi\in\ind$, where we denote
$$\langle\xi\rangle:=(1+|\lambda_{\xi}|)^{1/2},$$
where $\lambda_\xi$ are the eigenvalues of $\Sp$ labelled according to multiplicities.
We denote by $e_\xi$ the corresponding eigenvectors of $\Sp$.

The topology on $\mathcal
S(\ind)$ is given by the seminorms $p_{k}$, where
$k\in\mathbb N_{0}$ and
$$
p_{k}(\varphi):=\sup_{\xi\in\ind}\langle\xi\rangle^{k}|\varphi(\xi)|.
$$

We now define the $\H$-Fourier transform on $\Cs_{\H}^{\infty}$ as the mapping
$$
(\mathcal F_{\H}f)(\xi)=(f\mapsto\widehat{f}):
\Cs_{\H}^{\infty}\rightarrow\mathcal S(\ind)
$$
by the formula
\begin{equation}
\label{FourierTr}
\widehat{f}(\xi):=(\mathcal F_{\H}f)(\xi)=(f, e_{\xi}).
\end{equation}
The $\H$-Fourier transform
$\mathcal F_{\H}$ is a bijective homeomorphism from $\Cs_{{\H}}^{\infty}$ to $\mathcal
S(\ind)$.
Its inverse  $$\mathcal F_{\H}^{-1}: \mathcal S(\ind)
\rightarrow \Cs_{\H}^{\infty}$$ is given by
\begin{equation}
\label{InvFourierTr}
\mathcal F^{-1}_{{\H}}h=\sum_{\xi\in\ind} h(\xi) e_{\xi},\quad
h\in\mathcal S(\ind),
\end{equation}
so that the Fourier inversion formula becomes
\begin{equation}
\label{InvFourierTr0}
f=\sum_{\xi\in\ind} \widehat{f}(\xi)e_{\xi}
\quad \textrm{ for all } f\in\Cs_{{\H}}^{\infty}.
\end{equation}
The Plancherel's identity takes the form
\begin{equation}\label{EQ:Plancherel}
\|f\|_{\Sp}=\p{\sum_{\xi\in\ind}
|\widehat{f}(\xi)|^2}^{1/2}.
\end{equation}

\smallskip
Consequently, we can also define Sobolev spaces $H^s_\H$ associated to
$\H$. Thus, for any $s\in\mathbb R$, we set
\begin{equation}\label{EQ:HsL}
H^s_\H:=\left\{ f\in\Dis_{\H}: \H^{s/2}f\in
\Sp\right\},
\end{equation}
with the norm $\|f\|_{H^s_\H}:=\|\H^{s/2}f\|_{\Sp}$, which we understand as
\begin{equation*}\label{EQ:Hsub-norm}
\|f\|_{H^s_\H}:=\|\H^{s/2}f\|_{\Sp}:=
\p{\sum_{\xi\in\ind} \lambda_\xi^{s} |\widehat{f}(\xi)|^{2}}^{1/2}.
\end{equation*}
In particular, for $s=0$, we have $H^0_\H=\Sp.$

For $f,g\in\Sp$ the convolution $(f\ast_{\H}
g)$ was defined in \cite{RT16} by the formula
$$
f\ast_{\H} g : = \sum\limits_{\xi\in\ind}\widehat{f}(\xi) \, \widehat{g}(\xi)\,e_{\xi}.
$$
This convolution and its properties in general Hilbert spaces have been analysed in \cite{KRT17, RT16c}.
In terms of this convolution, the solution of \eqref{CPa-01} is given as
$$
u(t)=K_{0}(t)\s_{\H} u_{0}+K_{1}(t)\s_{\H} u_{1},
$$
where the $\H$-Fourier transforms $R_{i}(t, \xi)$ of $K_{i}(t)$ $(i=0,1)$ are determined from the ordinary differential equations
\begin{equation}\label{CPa-02}
\left\{ \begin{split}
\partial_{t}^{2}\widehat{u}(t, \xi)+b\partial_{t}\widehat{u}(t, \xi)+(\sigma_{\H}(\xi)+m) \widehat{u}(t, \xi)&=0, \quad t>0,\\
\widehat{u}(0, \xi)&=\widehat{u}_{0}(\xi), \\
\partial_{t}\widehat{u}(0, \xi)&=\widehat{u}_{1}(\xi),
\end{split}
\right.
\end{equation}
where $\sigma_{\H}(\xi)=\lambda_\xi$ is the symbol of the operator $\H$.
In the case $\sigma_{\H}(\xi)+m\neq b^{2}/4$ the equations \eqref{CPa-02} can be solved explicitly with their solutions given by
$$
\widehat{u}(t, \xi)=C_{0}\e[(-b/2+i\sqrt{\sigma_{\H}(\xi)+m-b^{2}/4})t]+C_{1}\e[(-b/2-i\sqrt{\sigma_{\H}(\xi)+m-b^{2}/4})t],
$$
where
$$
C_{0}=\left(\frac{b}{4i\sqrt{\sigma_{\H}(\xi)+m-b^{2}/4}}+\frac{1}{2}\right)\widehat{u}_{0}(\xi)
+\frac{1}{2i\sqrt{\sigma_{\H}(\xi)+m-b^{2}/4}}\widehat{u}_{1}(\xi),
$$
and
$$
C_{1}=\left(\frac{i b}{4\sqrt{\sigma_{\H}(\xi)+m-b^{2}/4}}+\frac{1}{2}\right)\widehat{u}_{0}(\xi)+
\frac{i}{2\sqrt{\sigma_{\H}(\xi)+m-b^{2}/4}}\widehat{u}_{1}(\xi).
$$
And, for the case $\sigma_{\H}(\xi)+m=b^{2}/4$ the equations \eqref{CPa-02} can be solved with their solutions given by
$$
\widehat{u}(t, \xi)=C_{0}\e[(-b/2)t]+C_{1} t \, \e[(-b/2)t],
$$
where
$$
C_{0}=\widehat{u}_{0}(\xi), \,\,\, C_{1}=\frac{b}{2}\widehat{u}_{0}(\xi)+\widehat{u}_{1}(\xi).
$$
Thus, for $\sigma_{\H}(\xi)+m\neq b^{2}/4$, we obtain
\begin{align*}
R_{0}(t, \xi)=\left(\frac{b}{4i\sqrt{\sigma_{\H}(\xi)+m-b^{2}/4}}+\frac{1}{2}\right)\e[(-b/2+i\sqrt{\sigma_{\H}(\xi)+m-b^{2}/4})t]\\
+\left(\frac{i b}{4\sqrt{\sigma_{\H}(\xi)+m-b^{2}/4}}+\frac{1}{2}\right)\e[(-b/2-i\sqrt{\sigma_{\H}(\xi)+m-b^{2}/4})t],
\end{align*}
and
\begin{align*}
R_{1}(t, \xi)=\frac{1}{2i\sqrt{\sigma_{\H}(\xi)+m-b^{2}/4}}\e[(-b/2+i\sqrt{\sigma_{\H}(\xi)+m-b^{2}/4})t]\\
+\frac{i}{2\sqrt{\sigma_{\H}(\xi)+m-b^{2}/4}}\e[(-b/2-i\sqrt{\sigma_{\H}(\xi)+m-b^{2}/4})t],
\end{align*}
and, for $\sigma_{\H}(\xi)+m=b^{2}/4$, we have
\begin{align*}
R_{0}(t, \xi)=\left(1+\frac{b}{2}t \right) \, \e[(-b/2)t], \,\,\, R_{1}(t, \xi)=t \, \e[(-b/2)t].
\end{align*}

Thus, using these formulae, we get

\begin{prop}\label{LEM: Est-01} Let $\lambda_{0}\geq 0$ be the bottom of the spectrum of $\H$ defined by \eqref{EQ: Eigen-Parameter}. Assume that $\lambda_{0}+m>0$. Then the solution $u$ of \eqref{CPa-01} satisfies the estimates
\begin{equation}\label{Est-0-01}
\|\partial_{t}^{\alpha}\H^{\beta}u(t)\|_{\Sp}\lesssim\e[-\frac{b}{2}t] \,\,
(\|u_{0}\|_{H_{\H}^{\alpha+2\beta}}+\|u_{1}\|_{H_{\H}^{\alpha-1+2\beta}}),
\end{equation}
for $0<b< 2\sqrt{\lambda_{0}+m}$, and
\begin{equation}\label{Est-0-01b}
\|\partial_{t}^{\alpha}\H^{\beta}u(t)\|_{\Sp}\lesssim (1+t)\e[-\frac{b}{2}t] (\|u_{0}\|_{H_{\H}^{\alpha+2\beta}}+\|u_{1}\|_{H_{\H}^{\alpha-1+2\beta}}),
\end{equation}
for $b=2\sqrt{\lambda_{0}+m}$, and
\begin{equation}\label{Est-0-02}
\|\partial_{t}^{\alpha}\H^{\beta}u(t)\|_{\Sp}\lesssim\e[-(\frac{b}{2}-\sqrt{\frac{b^{2}}{4}-\lambda_{0}-m})t] (\|u_{0}\|_{H_{\H}^{\alpha+2\beta}}+\|u_{1}\|_{H_{\H}^{\alpha-1+2\beta}}),
\end{equation}
for $2\sqrt{\lambda_{0}+m}<b$,
for all $\alpha\in\mathbb N_{0}$ and $\beta\geq 0.$
\end{prop}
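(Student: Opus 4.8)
The plan is to pass to the $\H$-Fourier transform, where the problem diagonalises, and to reduce the $\Sp$-norm estimates to uniform (in $\xi$) bounds on the multipliers $R_0(t,\xi)$, $R_1(t,\xi)$ and their $t$-derivatives. Since $\widehat{u}(t,\xi)=R_0(t,\xi)\widehat{u}_0(\xi)+R_1(t,\xi)\widehat{u}_1(\xi)$ and $\H^\beta$ acts as multiplication by $\lambda_\xi^\beta$ on the Fourier side, Plancherel's identity \eqref{EQ:Plancherel} gives
\[
\|\partial_t^\alpha\H^\beta u(t)\|_{\Sp}^2=\sum_{\xi\in\ind}\lambda_\xi^{2\beta}\,\big|\partial_t^\alpha R_0(t,\xi)\widehat{u}_0(\xi)+\partial_t^\alpha R_1(t,\xi)\widehat{u}_1(\xi)\big|^2.
\]
In view of the definition of the norms $\|\cdot\|_{H^s_\H}$, it therefore suffices to establish, uniformly in $\xi$, the pointwise bounds $\lambda_\xi^{\beta}|\partial_t^\alpha R_0(t,\xi)|\lesssim D(t)\,\lambda_\xi^{(\alpha+2\beta)/2}$ and $\lambda_\xi^{\beta}|\partial_t^\alpha R_1(t,\xi)|\lesssim D(t)\,\lambda_\xi^{(\alpha-1+2\beta)/2}$, where $D(t)$ is the claimed decay factor in each case; summing these against $|\widehat{u}_0|^2$ and $|\widehat{u}_1|^2$ then yields the three estimates.

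Writing $s:=\sigma_\H(\xi)+m=\lambda_\xi+m\ge\lambda_0+m>0$, I would first recast the explicit solutions in the more transparent form
\[
R_1(t,\xi)=\e[-\tfrac b2 t]\,\frac{\sin(\omega t)}{\omega},\qquad R_0(t,\xi)=\e[-\tfrac b2 t]\Big(\cos(\omega t)+\frac{b}{2\omega}\sin(\omega t)\Big),\qquad \omega=\sqrt{s-\tfrac{b^2}{4}},
\]
in the oscillatory regime $s>b^2/4$, with the hyperbolic analogues ($\omega=i\nu$, $\nu=\sqrt{b^2/4-s}$, $\sin\to i\sinh$, $\cos\to\cosh$) in the overdamped regime $s<b^2/4$. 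The point of this form is twofold. First, each application of $\partial_t$ amounts, in the exponential representation, to multiplication by a characteristic root $-\tfrac b2\pm i\omega$ of modulus $\sqrt{b^2/4+\omega^2}=\sqrt s$, so $|\partial_t^\alpha R_i|$ acquires a factor $s^{\alpha/2}=(\lambda_\xi+m)^{\alpha/2}\asymp\lambda_\xi^{\alpha/2}$ at high frequency; this is what produces the Sobolev index $\alpha+2\beta$. Second, at high frequency $\omega\asymp\lambda_\xi^{1/2}\to\infty$, so the coefficients $b/(2\omega)$ and $1/\omega$ tend to $0$, and in particular the extra factor $1/\omega\asymp\lambda_\xi^{-1/2}$ carried by $R_1$ accounts exactly for the shift to the index $\alpha-1+2\beta$ in the $u_1$-term.

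The decay factor $D(t)$ is then read off case by case from the sign of $s-b^2/4$ across the spectrum. If $0<b<2\sqrt{\lambda_0+m}$, then $s\ge\lambda_0+m>b^2/4$ for all $\xi$, so we are in the oscillatory regime uniformly, $\omega$ is bounded below, every exponential has modulus $\e[-bt/2]$, and $D(t)=\e[-bt/2]$. If $b>2\sqrt{\lambda_0+m}$, the low modes with $\lambda_0+m\le s<b^2/4$ are overdamped, their slowest-decaying exponential being $\e[(-b/2+\sqrt{b^2/4-s})t]$, which is largest at the bottom $s=\lambda_0+m$, while the high modes with $s>b^2/4$ decay faster like $\e[-bt/2]$, so $D(t)=\e[(-b/2+\sqrt{b^2/4-\lambda_0-m})t]$. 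The borderline $b=2\sqrt{\lambda_0+m}$ only touches $s=b^2/4$ at the bottom eigenvalue, where the critical formula contributes the factor $t\,\e[-bt/2]$, giving $D(t)=(1+t)\e[-bt/2]$.

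The step I expect to be the main obstacle is proving that the multiplier bounds are genuinely uniform in $\xi$ near the critical frequency $s=b^2/4$, where the coefficients $1/\omega$ and $1/\nu$ in $R_0,R_1$ appear to blow up. The resolution is that these singularities are removable: in the form above the singular factor always occurs as $\sin(\omega t)/\omega$ (or $\sinh(\nu t)/\nu$), which satisfies $|\sin(\omega t)/\omega|\le t$ and extends smoothly to $\omega=0$, and the same holds after differentiating in $t$. Hence the multipliers and all their $t$-derivatives stay bounded through the transition, at the cost of at most a polynomial factor $(1+t)$ that is absorbed by the exponential in \eqref{Est-0-01} and \eqref{Est-0-02} and is precisely the origin of the $(1+t)$ in the critical case \eqref{Est-0-01b}. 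Once these uniform multiplier estimates are in hand, summing over $\xi$ and invoking Plancherel completes the proof.
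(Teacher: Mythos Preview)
Your proposal is correct and follows exactly the paper's approach: the paper's proof consists of the single observation that Plancherel and the symbol identity $\mathcal F_{\H}(\partial_t^\alpha \H^\beta u)=\sigma_\H^\beta(\xi)\,\partial_t^\alpha\widehat u$ reduce everything to the explicit formulas for $R_0,R_1$, with no further details given. Your write-up supplies precisely the multiplier analysis the paper leaves implicit --- the $\sin(\omega t)/\omega$ reformulation, the high-frequency scaling $s^{\alpha/2}\asymp\lambda_\xi^{\alpha/2}$ and the extra $1/\omega$ in $R_1$, and the removable singularity at $s=b^2/4$ --- so the two arguments are the same in substance, yours being the fleshed-out version.
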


\begin{proof}
By taking into account the equalities
\begin{equation*}
\|\partial_{t}^{\alpha}\H^{\beta}u\|_{\Sp}=\|\mathcal F_{\H}(\partial_{t}^{\alpha}\H^{\beta}u)\|_{l^{2}},
\end{equation*}
\begin{equation*}
\mathcal F_{\H}(\partial_{t}^{\alpha}\H^{\beta}u)=\sigma_{\H}^{\beta}(\xi)\mathcal F_{\H}(\partial_{t}^{\alpha}u),
\end{equation*}
and the representations of $R_{0}(t, \xi)$ and $R_{1}(t, \xi)$, we obtain the statement of Proposition \ref{LEM: Est-01}.
\end{proof}

\begin{rem}\label{REM:positivity}
We note that we could combine the operator $\H$ and the mass term $m$ into a new operator $\H+m$. Then, the statement of Proposition \ref{LEM: Est-01} would hold under the assumption that the bottom of the spectrum $\H+m$ is $>0$, without assuming that the operator $\H$ is positive. However, we prefer to formulate it in this form since the operator $\H^{1/2}$ will appear later on in the Gagliardo-Nirenberg inequality in \eqref{Inty-G-N-01}.
\end{rem}

\section{Semilinear damped wave equation}
\label{SEC:semlinear}

In this section we consider the semilinear damped wave equation for the operator $\H$, taking the form:
\begin{equation}\label{EQ: NoL-01s}
\left\{ \begin{split}
\partial_{t}^{2}u(t)+\H u(t)+b\partial_{t}u(t)+m u(t)&=f(u), \quad t>0,\\
u(0)&=u_{0}\in\Sp, \\
\partial_{t}u(0)&=u_{1}\in\Sp.
\end{split}
\right.
\end{equation}
A typical example that we are interested in is $\Sp=L^2(\Rn)$ or $\Sp=L^2(M)$ for a compact manifold $M$, and
\begin{equation}\label{EQ:nonlinex}
f(u)=\mu |u|^{p-1} u,
\end{equation}
for $p>1$ and $\mu\in\mathbb R$.
However, we will be able to prove the global in time well-posedness for a more general class of nonlinearities $f(u)$ in abstract Hilbert spaces, satisfying the conditions \eqref{PR: f-007} in Theorem \ref{TH: 01}.

We now introduce the following notion of the Gagliardo--Nirenberg index that will be important for our global in time well-posedness result for \eqref{EQ: NoL-01s}.
We may identify our Hilbert space $\Sp$ as $\Sp=L^2(\Omega)$ for a measure space $\Omega$, so that
we can also use the scale $L^p(\Omega)$ of spaces on $\Omega$.
We can write $\|\cdot\|_{\Sp}=\|\cdot\|_2$ in this notation.

\begin{defi}[Gagliardo--Nirenberg index]
\label{DEF:GN}
We say that $p\geq 1$ is Gag\-li\-ardo--Nirenberg admissible for the operator $\H$ if
the Gagliardo--Nirenberg type inequality
\begin{equation}\label{Inty-G-N-01}
\|u\|_{2p}\leq C \|\H^{1/2} u\|_{2}^{\theta} \, \|u\|_{2}^{1-\theta}
\end{equation}
holds for some $\theta=\theta(p)\in[0, 1]$.
\end{defi}

\begin{ex}[Harmonic oscillator]
Note that for the harmonic oscillator $\H=-\Delta+|x|^{2}$ in $\mathbb R^{n}$, the following indices are Gagliardo--Nirenberg admissible, i.e. we have that \eqref{Inty-G-N-01} holds for
\begin{equation}\label{EQ:GN}
\left\{ \begin{split}
n=1 \,\, \hbox{and} \,\,
n=2: & \,\,\, 1\leq p< \infty; \\
n\geq3: & \,\,\, 1\leq p\leq \frac{n}{n-2}.
\end{split}
\right.
\end{equation}
These properties follow from the corresponding properties of the Laplacian, see the results of Nirenberg's paper \cite{N59}. In this case we also have the bottom of the spectrum of $\H$ given by $\lambda_0=n$, see Appendix \ref{APP}.
\end{ex}

\begin{ex}[Laplacian on spheres]
For $\H$ being the Laplacian on the sphere $\mathbb S^n$, with $\Sp=L^2(\mathbb S^n)$,
the Gagliardo-Nirenberg admissible indices are also given by \eqref{EQ:GN}, see e.g. \cite{Dol}.
\end{ex}

\begin{ex}[Laplacian on compact Riemannian manifolds]
More generally, for $\H$ being the Laplacian on the compact Riemannian manifold $\mathcal M$, with $\Sp=L^2(\mathcal M)$, the Gagliardo-Nirenberg admissible indices are given by
\begin{equation}\label{EQ:GN-03}
\left\{ \begin{split}
n=2: & \,\,\, 1\leq p< \infty; \\
n\geq3: & \,\,\, 1\leq p\leq \frac{n}{n-2}.
\end{split}
\right.
\end{equation}
For this, we refer to the papers of Ceccon and Montenegro \cite{CM08, CM13}. For more references, see e.g. \cite{B03, ACM15, CD16} and references therein.
\end{ex}

\begin{ex}[Landau Hamiltonian]\label{EX:LH}
If we take the twisted Laplacian on $\mathbb C^{n}$
$$
\H=\sum_{j=1}^{n}(Z_{j}\bar{Z}_{j}+\bar{Z}_{j}Z_{j}),
$$
with $Z_{j}=\frac{\partial}{\partial z_{j}}+\frac{1}{2}\bar{z}_{j}$, and $\bar{Z}_{j}=-\frac{\partial}{\partial \bar{z}_{j}}+\frac{1}{2}z_{j}$, then the Gagliardo-Nirenberg admissible indices for the twisted Laplacian (Landau Hamiltonian) $\H$ are given by
\begin{equation}\label{EQ:GN-000}
\left\{ \begin{split}
n=1: & \,\,\, 1\leq p< \infty; \\
n\geq2: & \,\,\, 1\leq p\leq \frac{n}{n-1}.
\end{split}
\right.
\end{equation}
Moreover, we have $\lambda_0=n$.  The spectrum of $\H$ is discrete but the eigenvalues have infinite multiplicities, see e.g. \cite {RS15} or \cite{RT16a}. However, as we do not make any assumption on multiplicities, this situation is covered by our setting.

\begin{proof}[Proof of \eqref{EQ:GN-000}]
It follows from the H\"{o}lder inequality that
$$
\int_{\mathbb C^{n}}|u|^{2p}dz=\int_{\mathbb C^{n}}|u|^{2ps}|u|^{2p(1-s)}dz\leq\left(\int_{\mathbb C^{n}}|u|^{\frac{2n}{n-1}}dz\right)^{ps \frac{n-1}{n}} \left(\int_{\mathbb C^{n}}|u|^{2}dz\right)^{p(1-s)},
$$
for any $s\in [0,1]$ such that
\begin{equation}\label{EQ: ind-p}
ps\frac{n-1}{n}+p(1-s)=1.
\end{equation}
Then by using the Sobolev embedding from \cite[Lemma 2.3]{RS15}, we obtain
$$
\|u\|_{L^{2p}(\mathbb C^{n})}\lesssim \|u\|_{\dot{L}_{1}^{2}(\mathbb C^{n})}^{s} \|u\|_{L^{2}(\mathbb C^{n})}^{1-s}.
$$
Finally, \eqref{EQ: ind-p} yields \eqref{EQ:GN-000}.
\end{proof}
\end{ex}

For the convenience of the reader we recall the definition of the  Sobolev spaces $H^s_\H$, $s\in\mathbb R$, associated to
$\H$:
\begin{equation*}\label{EQ:HsL-00}
H^s_\H:=\left\{ f\in H^{-\infty}_\H: \H^{s/2}f\in
L^2\right\},
\end{equation*}
with the norm $\|f\|_{H^s_\H}:=\|\H^{s/2}f\|_{L^2}.$
We also recall that $\lambda_0=\lambda_0(\H)$ denotes the bottom of the spectrum of $\H$ defined by \eqref{EQ: Eigen-Parameter}.

\begin{thm} \label{TH: 01}
Let $p>1$ be Gagliardo-Nirenberg admissible for $\H$, i.e. assume that \eqref{Inty-G-N-01} holds. Suppose that $\lambda_0\geq 0$ and $\lambda_0+m>0$. Assume that $f$ satisfies the properties
\begin{equation} \label{PR: f-007}
\left\{
\begin{split}
f(0) & =0, \\
|f(u)-f(v)| & \leq C (|u|^{p-1}+|v|^{p-1})|u-v|,
\end{split}
\right.
\end{equation}
for $u, v\in\mathbb R$. Assume that the Cauchy data $u_{0}\in H_{\H}^{1}$ and $u_{1}\in \Sp$ satisfy
\begin{equation} \label{EQ: Th-cond-01}
\|u_{0}\|_{H_{\H}^{1}}+\|u_{1}\|_{\Sp}\leq\varepsilon.
\end{equation}
Then, there exists a small positive constant $\varepsilon_{0}>0$ such that the Cauchy problem \begin{equation*}\label{EQ: NoL-02}
\left\{ \begin{split}
\partial_{t}^{2}u(t)+\H u(t)+b\partial_{t}u(t) + m u(t)&=f(u), \quad t>0,\\
u(0)&=u_{0}\in H_{\H}^{1}, \\
\partial_{t}u(0)&=u_{1}\in\Sp,
\end{split}
\right.
\end{equation*}
has a unique global solution $u\in C(\mathbb R_{+}; H_{\H}^{1})\bigcap C^{1}(\mathbb R_{+}; \Sp)$ for all $0<\varepsilon\leq\varepsilon_{0}$.

Moreover, when $0 < b < 2\sqrt{\lambda_{0}+m}$ we have
\begin{equation}\label{TH-NoL-1-01}
\|\partial_{t}^{\alpha}\H^{\beta}u(t)\|_{\Sp}\lesssim (1+t)^{1/2} \e[-\frac{b}{2}t],
\end{equation}
and when $b = 2\sqrt{\lambda_{0}+m}$ we have
\begin{equation}\label{TH-NoL-1-01}
\|\partial_{t}^{\alpha}\H^{\beta}u(t)\|_{\Sp}\lesssim (1+t)^{3/2} \e[-\frac{b}{2}t],
\end{equation}
and when $2\sqrt{\lambda_{0}+m}<b$ we have
\begin{equation}\label{TH-NoL-1-03} \|\partial_{t}^{\alpha}\H^{\beta}u(t)\|_{\Sp}\lesssim (1+t)^{1/2}\e[-(\frac{b}{2}-\sqrt{\frac{b^{2}}{4}-\lambda_{0}-m})t],
\end{equation}
for $(\alpha, \beta)=(0, 0)$ and $(\alpha, \beta)=(0, 1/2)$, and $(\alpha, \beta)=(1, 0)$.
\end{thm}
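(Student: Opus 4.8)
The plan is to solve the problem by Duhamel's principle together with a contraction argument in a norm that builds in the asserted decay. With the convolution kernels $K_0,K_1$ of Section~\ref{SEC:linear}, a function $u$ solves the Cauchy problem if and only if it satisfies
\begin{equation*}
u(t)=K_0(t)\s_\H u_0+K_1(t)\s_\H u_1+\int_0^t K_1(t-s)\s_\H f(u(s))\,ds,
\end{equation*}
since $K_1$ is the propagator carrying zero displacement and unit velocity, so that the Duhamel term solves the inhomogeneous equation with vanishing Cauchy data. I would then fix the weight
\begin{equation*}
w(t):=
\begin{cases}
(1+t)^{1/2}\e[-\frac{b}{2}t], & 0<b<2\sqrt{\lambda_0+m},\\
(1+t)^{3/2}\e[-\frac{b}{2}t], & b=2\sqrt{\lambda_0+m},\\
(1+t)^{1/2}\e[-(\frac{b}{2}-\sqrt{\frac{b^2}{4}-\lambda_0-m})t], & b>2\sqrt{\lambda_0+m},
\end{cases}
\end{equation*}
and let $X$ be the set of $u\in C(\mathbb R_+;H_\H^1)\cap C^1(\mathbb R_+;\Sp)$ for which
\begin{equation*}
\|u\|_X:=\sup_{t\geq 0} w(t)^{-1}\bigl(\|u(t)\|_\Sp+\|\H^{1/2}u(t)\|_\Sp+\|\partial_t u(t)\|_\Sp\bigr)<\infty,
\end{equation*}
the three summands being exactly the pairs $(\alpha,\beta)\in\{(0,0),(0,\tfrac12),(1,0)\}$ in the statement. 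On the closed ball $B_M=\{u:\|u\|_X\leq M\}$ I define $N[u](t)$ to be the right-hand side of the integral equation and aim to show that $N$ maps $B_M$ into itself and is a contraction there once $M$ and $\eps$ are small.

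The linear part $K_0(t)\s_\H u_0+K_1(t)\s_\H u_1$ is controlled directly by Proposition~\ref{LEM: Est-01}: each of the three pairs satisfies $\alpha+2\beta\leq 1$ and $\alpha-1+2\beta\leq 0$, so \eqref{Est-0-01}--\eqref{Est-0-02} bound it by $\lesssim w(t)\,(\|u_0\|_{H_\H^1}+\|u_1\|_\Sp)\lesssim\eps\,w(t)$, using the continuous embeddings $H_\H^1\hookrightarrow H_\H^{\alpha+2\beta}$ and $\Sp\hookrightarrow H_\H^{\alpha-1+2\beta}$, which hold because $\lambda_0+m>0$ (equivalently, after the harmless mass shift of Remark~\ref{REM:positivity}). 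For the nonlinear part I would use $f(0)=0$ and the second line of \eqref{PR: f-007} with $v=0$ to get $|f(u)|\leq C|u|^p$, hence $\|f(u(s))\|_\Sp\lesssim\|u(s)\|_{2p}^p$, and then invoke the Gagliardo--Nirenberg inequality \eqref{Inty-G-N-01} to obtain
\begin{equation*}
\|f(u(s))\|_\Sp\lesssim \|\H^{1/2}u(s)\|_2^{p\theta}\,\|u(s)\|_2^{p(1-\theta)}\lesssim \|u\|_X^{\,p}\,w(s)^p ,
\end{equation*}
since both norms on the right are dominated by $\|u\|_X\,w(s)$. Feeding the data $(0,f(u(s)))$ into Proposition~\ref{LEM: Est-01} (only $\|f(u(s))\|_\Sp$ is needed, as $\alpha-1+2\beta\leq 0$; the Leibniz boundary term in the $(1,0)$ case drops out because $K_1(0)=0$) reduces the Duhamel term to $\|u\|_X^p$ times the scalar integral $\int_0^t w(t-s)\,w(s)^p\,ds$.

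The decisive step, and the one I expect to be the main obstacle, is the weighted convolution bound
\begin{equation*}
\int_0^t w(t-s)\,w(s)^p\,ds\lesssim w(t),
\end{equation*}
which holds precisely because $p>1$. I would prove it by splitting at $t/2$: on $[0,t/2]$ one estimates $w(t-s)\lesssim w(t)\,\e[\frac{b}{2}s]$ up to polynomial factors absorbed into $w(t)$, leaving a convergent integral of the form $\int_0^\infty(1+s)^{N}\e[-\frac{b}{2}(p-1)s]\,ds<\infty$, the convergence being exactly where $p>1$ is used; on $[t/2,t]$ the factor $w(s)^p$ supplies an exponential gain $\e[-\frac{b}{4}(p-1)t]$ that dominates all polynomial growth. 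The critical regime $b=2\sqrt{\lambda_0+m}$ is the most delicate, the extra linear factor $(1+t)$ being the reason the target weight there carries the higher power $(1+t)^{3/2}$. Combining the two bounds gives $\|N[u]\|_X\lesssim\eps+M^p$. The contraction estimate is entirely parallel: the full Lipschitz bound in \eqref{PR: f-007} yields $\|f(u(s))-f(v(s))\|_\Sp\lesssim(\|u(s)\|_{2p}^{p-1}+\|v(s)\|_{2p}^{p-1})\,\|u(s)-v(s)\|_{2p}$, and after \eqref{Inty-G-N-01} and the same convolution bound one gets $\|N[u]-N[v]\|_X\lesssim M^{p-1}\|u-v\|_X$. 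Choosing $M\simeq\eps$ small enough makes $N$ a contraction on $B_M$; its unique fixed point is the sought global solution in $C(\mathbb R_+;H_\H^1)\cap C^1(\mathbb R_+;\Sp)$, and the bound $\|u\|_X\leq M$ is exactly the asserted decay \eqref{TH-NoL-1-03} together with its subcritical and critical counterparts.
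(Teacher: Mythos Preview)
Your proposal is correct and follows the same overall architecture as the paper: Duhamel's formula, a contraction in a closed ball of a weighted space encoding the asserted decay, Proposition~\ref{LEM: Est-01} for the linear piece, and the Gagliardo--Nirenberg inequality \eqref{Inty-G-N-01} to control $\|f(u(s))\|_{\Sp}$ by $(\|\H^{1/2}u(s)\|_{\Sp}+\|u(s)\|_{\Sp})^p$.

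The one genuine technical difference is how the Duhamel integral is estimated. The paper does \emph{not} prove the scalar convolution bound $\int_0^t w(t-s)\,w(s)^p\,ds\lesssim w(t)$; instead it applies the Cauchy--Schwarz inequality in the time variable,
\[
\Bigl\|\int_0^t K_1(t-\tau)\ast_{\H}f(u(\tau))\,d\tau\Bigr\|_{\Sp}^2
\;\leq\; t\int_0^t \bigl\|K_1(t-\tau)\ast_{\H}f(u(\tau))\bigr\|_{\Sp}^2\,d\tau,
\]
and it is exactly this factor of $t$ that produces the $(1+t)^{1/2}$ in the weight (and $(1+t)^{3/2}=(1+t)\cdot(1+t)^{1/2}$ in the critical case). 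Your Minkowski/splitting argument is a legitimate alternative and in fact slightly sharper: in the subcritical and supercritical regimes it would close with $w(t)=\e[-\gamma t]$ alone, without any polynomial loss, since $\int_0^\infty(1+s)^{N}\e[-\gamma(p-1)s]\,ds<\infty$ already uses up the $p>1$ hypothesis. The paper's Cauchy--Schwarz route is cruder but avoids the $t/2$ splitting and keeps the computation to a single line; your route buys a cleaner decay rate at the cost of a (very mild) case analysis. Either way the contraction closes for the same reason, namely $p>1$.
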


As noted in the introduction, if $\Sp=L^2$, then an example of $f$ satisfying \eqref{PR: f-007} is given by \eqref{EQ:nonlinex}, i.e. by $$f(u)=\mu |u|^{p-1} u, \quad p>1,\;  \mu\in\mathbb R,$$ or by a differentiable function $f$ such that
$$|f'(u)|\leq C|u|^{p-1}.$$

\begin{proof}[Proof of Theorem \ref{TH: 01}]
Let us consider the closed subsets $Z_j$ of the space $C^{1}(\mathbb R_{+}; \,\, H^{1}_{\H})$ defined as
$$
Z_{j} :=\{u\in C^{1}(\mathbb R_{+}; \,\, H^{1}_{\H}); \,\, \|u\|_{Z_{j}}\leq L_{j}\}, \,\,\, j=1,2,3,
$$
with
\begin{align*}
\|u\|_{Z_{1}}:=\sup_{t\geq0}\{(1+t)^{-1/2}\e[\frac{b}{2}t](\|u(t, \cdot)\|_{2}+\|\partial_{t}u(t, \cdot)\|_{2}+\|\H^{1/2}u(t, \cdot)\|_{2})\},
\end{align*}
if $0<b< 2\sqrt{\lambda_{0}+m}$, and
\begin{align*}
\|u\|_{Z_{2}}:=\sup_{t\geq0}\{(1+t)^{-3/2}\e[\frac{b}{2}t](\|u(t, \cdot)\|_{2}+\|\partial_{t}u(t, \cdot)\|_{2}+\|\H^{1/2}u(t, \cdot)\|_{2})\},
\end{align*}
if $b = 2\sqrt{\lambda_{0}+m}$, and
\begin{align*}
\|u\|_{Z_{3}}:=\sup_{t\geq0}\{(1+t)^{-1/2}\e[(\frac{b}{2}-\sqrt{\frac{b^{2}}{4}-\lambda_{0}-m})t](\|u(t, \cdot)\|_{2}+\|\partial_{t}u(t, \cdot)\|_{2}+\|\H^{1/2}u(t, \cdot)\|_{2})\},
\end{align*}
if $2\sqrt{\lambda_{0}+m}<b$, where $L_{j}>0$ ($j=1, 2, 3$) will be specified later.
Now we define the mapping $\Gamma$ on $Z_j$ by
\begin{equation}
\label{MAP-01}
\begin{split}
\Gamma[u](t):= K_{0}(t)\ast_{\H}u_{0} &+ K_{1}(t)\ast_{\H}u_{1}+\int_{0}^{t}K_{1}(t-\tau)\ast_{\H}f(u(\tau))d\tau,
\end{split}
\end{equation}
where $K_0$, $K_1$, and the convolution $\ast_{\H}$ are as defined in Section \ref{SEC:linear}.

We claim that
\begin{equation}
\label{MAP-02}
\|\Gamma[u]\|_{Z_{j}}\leq L_{j}
\end{equation}
for all $u\in Z_{j}$ and
\begin{equation}
\label{MAP-03}
\|\Gamma[u]-\Gamma[v]\|_{Z_{j}}\leq \frac{1}{r_{j}} \|u-v\|_{Z_{j}}
\end{equation}
for all $u, v\in Z_{j}$ with $r_{j}>1$ and $j=1,2,3$. Once we proved \eqref{MAP-02} and \eqref{MAP-03}, we get that $\Gamma$ is a contraction mapping on $Z_{j}$. The Banach fixed point theorem then implies that $\Gamma$ has a unique fixed point on $Z_{j}$ with $j=1,2,3$. It means that there exists a unique global solution $u$ of the equation
$$
u=\Gamma[u] \,\,\, \hbox{in} \,\,\, Z_{j},
$$
which also gives the solution to \eqref{EQ: NoL-01s}.
So, we now concentrate on proving \eqref{MAP-02} and \eqref{MAP-03}.

As we noted before we may identify our Hilbert space $\Sp$ as a measure space $\Sp=L^2(\Omega)$, so that
we can also use the scale of $L^p$ spaces on $\Omega$. We can write $\|\cdot\|_{\Sp}=\|\cdot\|_2$ in this notation.
Recalling the second assumption in \eqref{PR: f-007} on $f$, namely,
$$
|f(u)-f(v)|\leq C (|u|^{p-1}+|v|^{p-1})|u-v|,
$$
applying it to functions $u=u(t)$ and $v=v(t)$ we get
$$
\|(f(u)-f(v))(t, \cdot)\|_{2}^{2}\leq C \int_\Omega (|u(t)|^{p-1}+|v(t)|^{p-1})^{2}|u(t)-v(t)|^{2}.
$$
Consequently, by the H\"{o}lder inequality, we get
$$
\|(f(u)-f(v))(t, \cdot)\|_{2}^{2}\leq C (\|u(t, \cdot)\|^{p-1}_{2p}+\|v(t, \cdot)\|^{p-1}_{2p})^{2} \|(u-v)(t, \cdot)\|^{2}_{2p}
$$
since
$$
\frac{1}{\frac{p}{p-1}}+\frac{1}{p}=1.
$$
By the Gagliardo--Nirenberg-type inequality \eqref{Inty-G-N-01} which holds for $p$ by the assumption, and by Young's inequality
$$
a^{\theta} b^{1-\theta}\leq \theta a + (1-\theta) b
$$
for $0\leq\theta\leq1$, $a,b\geq0$,
we obtain
\begin{equation}
\label{EQ: Gagliardo-Nirenberg-01}
\begin{split}
\|(f(u)& -f(v))(t, \cdot)\|_{2} \leq C \Big[\left(\|\H^{1/2} u(t, \cdot)\|_{2}+\|u(t, \cdot)\|_{2}\right)^{p-1}\\
& +\left(\|\H^{1/2} v(t, \cdot)\|_{2}+\|v(t, \cdot)\|_{2}\right)^{p-1}\Big]
\\
& \times \left(\|\H^{1/2} (u-v)(t, \cdot)\|_{2}+\|(u-v)(t, \cdot)\|_{2}\right).
\end{split}
\end{equation}
Recalling that $\|u\|_{Z_{j}}\leq L_{j}$ and $\|v\|_{Z_{j}}\leq L_{j}$ for $j=1,2,3$, from \eqref{EQ: Gagliardo-Nirenberg-01} we get
\begin{equation}
\label{EQ: Gagliardo-Nirenberg-02}
\|(f(u)-f(v))(t, \cdot)\|_{2}  \leq C (1+t)^{p/2}\e[-\frac{b}{2} p t] L_{1}^{p-1} \|u-v\|_{Z_{1}},
\end{equation}
for $0<b< 2\sqrt{\lambda_{0}+m}$, and
\begin{equation}
\label{EQ: Gagliardo-Nirenberg-02a}
\|(f(u)-f(v))(t, \cdot)\|_{2}  \leq C (1+t)^{3p/2}\e[-\frac{b}{2} p t] L_{2}^{p-1} \|u-v\|_{Z_{2}},
\end{equation}
for $b = 2\sqrt{\lambda_{0}+m}$, and
\begin{equation}
\label{EQ: Gagliardo-Nirenberg-02b}
\|(f(u)-f(v))(t, \cdot)\|_{2}  \leq C (1+t)^{p/2} \e[-(\frac{b}{2}-\sqrt{\frac{b^{2}}{4}-\lambda_{0}-m}) p t] L_{3}^{p-1} \|u-v\|_{Z_{3}},
\end{equation}
for $2\sqrt{\lambda_{0}+m}<b$.

By putting $v=0$ in \eqref{EQ: Gagliardo-Nirenberg-02}--\eqref{EQ: Gagliardo-Nirenberg-02b}, and using that $f(0)=0$, we also have
\begin{equation}
\label{EQ: Gagliardo-Nirenberg-03}
\begin{split}
\|f(u)(t, \cdot)\|_{2} & \leq C (1+t)^{p/2}\e[-\frac{b}{2} p t] L_{1}^{p},\quad \textrm{ for } \; 0 < b < 2\sqrt{\lambda_{0}+m},
\end{split}
\end{equation}
and
\begin{equation}
\label{EQ: Gagliardo-Nirenberg-03a}
\begin{split}
\|f(u)(t, \cdot)\|_{2} & \leq C (1+t)^{3p/2}\e[-\frac{b}{2} p t] L_{2}^{p},\quad \textrm{ for } \; b = 2\sqrt{\lambda_{0}+m},
\end{split}
\end{equation}
and
\begin{equation}
\label{EQ: Gagliardo-Nirenberg-03b}
\begin{split}
\|f(u)(t, \cdot)\|_{2} & \leq C (1+t)^{p/2}\e[-(\frac{b}{2}-\sqrt{\frac{b^{2}}{4}-\lambda_{0}-m}) p t] L_{3}^{p},
\quad \textrm{ for } \; 2\sqrt{\lambda_{0}+m}<b.
\end{split}
\end{equation}

Now, let us estimate the integral operator
\begin{equation}
\label{OP: Int-NoL-01}
\begin{split}
J[u](t,x):=\int_{0}^{t}K_{1}(t-\tau)\ast_{\H}f(u(\tau,x))d\tau.
\end{split}
\end{equation}
More precisely, for $\alpha=0, 1$ and for all $\beta\geq 0$ we have
\begin{equation*}
\label{OP: Int-NoL-02}
\begin{split}
|\partial^{\alpha}_{t} & \H^{\beta}J[u](t, x)|^{2}\leq \Big| \int_{0}^{t}\partial^{\alpha}_{t}\H^{\beta}K_{1}(t-\tau)\ast_{\H}f(u(\tau, x)) d \tau \Big|^{2} \\
&\leq \left(\int_{0}^{t} \Big| \partial^{\alpha}_{t}\H^{\beta}K_{1}(t-\tau)\ast_{\H}f(u(\tau, x)) \Big| d \tau \right)^{2} \\
&\leq t \int_{0}^{t} \Big| \partial^{\alpha}_{t}\H^{\beta}K_{1}(t-\tau)\ast_{\H}f(u(\tau, x)) \Big|^{2} d \tau.
\end{split}
\end{equation*}
Then for $0<b< 2\sqrt{\lambda_{0}+m}$, by using Proposition \ref{LEM: Est-01}, we get
\begin{equation}
\label{OP: Int-NoL-03}
\begin{split}
&\|\partial^{\alpha}_{t}\H^{\beta} J[u](t, \cdot)\|_{2}^{2} \leq t \int_{0}^{t}\| \partial^{\alpha}_{t}\H^{\beta} K_{1}(t-\tau)\ast_{\H}f(u(\tau, \cdot))  \|_{2}^{2} d \tau \\
&\leq C t \int_{0}^{t} \e[-2\frac{b}{2}(t-\tau)] \|f(u(\tau, \cdot)) \|_{H^{\alpha-1+2\beta}_{\H}}^{2} d \tau \\
& = C t \e[ - b t ] \int_{0}^{t} \e[ b \tau ] \|f(u(\tau, \cdot)) \|_{H^{\alpha-1+2\beta}_{\H}}^{2} d \tau.
\end{split}
\end{equation}
Similarly, for $b = 2\sqrt{\lambda_{0}+m}$ we obtain
\begin{equation}
\label{OP: Int-NoL-03a}
\begin{split}
&\|\partial^{\alpha}_{t}\H^{\beta} J[u](t, \cdot)\|_{2}^{2} \leq C t \e[ - b t ] \int_{0}^{t} (1+t-\tau)^{2} \e[ b \tau ] \|f(u(\tau, \cdot)) \|_{H^{\alpha-1+2\beta}_{\H}}^{2} d \tau \\
&\leq C t(1+t)^{2} \e[ - b t ] \int_{0}^{t} \e[ b \tau ] \|f(u(\tau, \cdot)) \|_{H^{\alpha-1+2\beta}_{\H}}^{2} d \tau.
\end{split}
\end{equation}
Also, for $2\sqrt{\lambda_{0}+m}<b$ we have
\begin{equation}
\label{OP: Int-NoL-03b}
\begin{split}
\|\partial^{\alpha}_{t}\H^{\beta} J[u](t, \cdot)&\|_{2}^{2} \leq C t \e[- 2(\frac{b}{2}-\sqrt{\frac{b^{2}}{4}-\lambda_{0}-m}) t] \\
&\times\int_{0}^{t} \e[  2(\frac{b}{2}-\sqrt{\frac{b^{2}}{4}-\lambda_{0}-m}) \tau] \|f(u(\tau, \cdot)) \|_{H^{\alpha-1+2\beta}_{\H}}^{2} d \tau.
\end{split}
\end{equation}
Now we have to control the norm
$
\|f(u(\tau, \cdot)) \|_{H^{\alpha-1+2\beta}_{\H}}^{2}.
$
We notice that for $(\alpha, \beta)=(0, 1/2)$ and $(\alpha, \beta)=(1, 0)$ we have $\alpha-1+2\beta\leq0$.

Thus, using \eqref{EQ: Gagliardo-Nirenberg-02} and \eqref{EQ: Gagliardo-Nirenberg-03}, 
we obtain from \eqref{OP: Int-NoL-03} that
\begin{equation}
\label{OP: Int-NoL-04}
\|\partial^{\alpha}_{t} \H^{\beta} ( J[u] - J[v] )(t, \cdot)\|_{2} \leq C t^{1/2} \e[- \frac{b}{2} t] \, L_{1}^{p-1}\|u-v\|_{Z_{1}},
\end{equation}
and
\begin{equation}
\label{OP: Int-NoL-05}
\|\partial^{\alpha}_{t} \H^{\beta} J[u](t, \cdot)\|_{2} \leq C t^{1/2} \e[- \frac{b}{2} t] \, L^{p}_{1},
\end{equation}
with the estimates \eqref{OP: Int-NoL-04}--\eqref{OP: Int-NoL-05} holding
for $(\alpha, \beta)=(0, 1/2)$ and $(\alpha, \beta)=(1, 0)$.

Similarly, using \eqref{EQ: Gagliardo-Nirenberg-02a} and \eqref{EQ: Gagliardo-Nirenberg-03a}, 
we get from \eqref{OP: Int-NoL-03a}:
\begin{equation}
\label{OP: Int-NoL-04a}
\|\partial^{\alpha}_{t} \H^{\beta} ( J[u] - J[v] )(t, \cdot)\|_{2} \leq C t^{1/2}(1+t) \e[- \frac{b}{2} t] \, L_{2}^{p-1}\|u-v\|_{Z_{2}},
\end{equation}
and
\begin{equation}
\label{OP: Int-NoL-05a}
\|\partial^{\alpha}_{t} \H^{\beta} J[u](t, \cdot)\|_{2} \leq C t^{1/2}(1+t) \e[- \frac{b}{2} t] \, L^{p}_{2},
\end{equation}
with the estimates \eqref{OP: Int-NoL-04}--\eqref{OP: Int-NoL-05} holding
for $(\alpha, \beta)=(0, 1/2)$ and $(\alpha, \beta)=(1, 0)$.

Also, from \eqref{OP: Int-NoL-03b} by using \eqref{EQ: Gagliardo-Nirenberg-02b} and \eqref{EQ: Gagliardo-Nirenberg-03b}, 
we get
\begin{equation}
\label{OP: Int-NoL-04b}
\|\partial^{\alpha}_{t} \H^{\beta} ( J[u] - J[v] )(t, \cdot)\|_{2} \leq C t^{1/2} \e[- (\frac{b}{2}-\sqrt{\frac{b^{2}}{4}-\lambda_{0}-m}) t] \, L_{3}^{p-1}\|u-v\|_{Z_{3}},
\end{equation}
\begin{equation}
\label{OP: Int-NoL-05b}
\|\partial^{\alpha}_{t} \H^{\beta} J[u](t, \cdot)\|_{2} \leq C t^{1/2} \e[- (\frac{b}{2}-\sqrt{\frac{b^{2}}{4}-\lambda_{0}-m}) t] \, L_{3}^{p},
\end{equation}
for $(\alpha, \beta)=(0, 1/2)$ and $(\alpha, \beta)=(1, 0)$.

Consequently, by the definition of $\Gamma[u]$ in \eqref{MAP-01} and
using Proposition \ref{LEM: Est-01} for the first term and estimates for $\|J[u]\|_{Z_{j}}$ for the second term below, we obtain
\begin{equation}
\label{Gamma: Contraction mapping-01}
\begin{split}
\|\Gamma[u]\|_{Z_{j}} & \leq \|K_{0}(t)\ast_{\H}u_{0} + K_{1}(t)\ast_{\H}u_{1}\|_{Z_{j}} + \|J[u]\|_{Z_{j}} \\
& \leq C_{1j}(\|u_{0}\|_{H^{1}_{\H}}+\|u_{1}\|_{L^{2}}) + C_{2j}L_{j}^{p},
\end{split}
\end{equation}
for some $C_{1j}>0$ and $C_{2j}>0$, $j=1,2,3$.

Moreover, in the similar way, we can estimate
\begin{equation}
\label{Gamma: Contraction mapping-02}
\|\Gamma[u]-\Gamma[v]\|_{Z_{j}} \leq \|J[u] - J[v]\|_{Z_{j}} \leq C_{3j}L_j^{p-1} \|u-v\|_{Z_{j}},
\end{equation}
for some $C_{3j}>0$, $j=1,2,3$. Taking some $r_{j}>1$, we choose $$L_{j}:=r_{j} C_{1j}(\|u_{0}\|_{H^{1}_{\H}}+\|u_{1}\|_{L^{2}})$$ with sufficiently small $\|u_{0}\|_{H^{1}_{\H}}+\|u_{1}\|_{L^{2}}<\varepsilon$ so that
\begin{equation}
\label{Gamma: Contraction mapping-03}
C_{2j}L_{j}^{p}\leq \frac{1}{r_{j}} L_{j}, \,\,\,\, C_{3j}L_{j}^{p-1}\leq \frac{1}{r_{j}}.
\end{equation}
Then estimates \eqref{Gamma: Contraction mapping-01}--\eqref{Gamma: Contraction mapping-03} imply the desired estimates \eqref{MAP-02} and \eqref{MAP-03}. This means that we can apply the fixed point theorem for the existence of solutions.

The estimates \eqref{TH-NoL-1-01} and \eqref{TH-NoL-1-03} follow from \eqref{OP: Int-NoL-03}-- \eqref{OP: Int-NoL-03b}. Theorem \ref{TH: 01} is now proved.
\end{proof}

\section{Nonlinear damped wave equation}
\label{S: n+2}

In this section we deal with the general nonlinearity by considering
the nonlinear term of the form $F(u, u_t, \H^{1/2} u)$, for some function $F:\mathbb C^3\to\mathbb C$. Now, let us suppose that the following property holds.

Denoting $$U:=(u, u_t, \H^{1/2} u)$$ for $u\in C^{1}(\mathbb R_{+}; H_{\H}^{1}),$
we assume that $F(U)\in C(\mathbb R_{+}; H_{\H}^{1})$ and we call the index $p>1$ to be $(F,\H)$-admissible if we have the estimate
\begin{equation} \label{PR: f-02}
\begin{split}
\|F(U)-F(V)\|_{H_{\H}^{1}}\lesssim (\|U\|_{H_{\H}^{1}}^{p-1}+ \|V\|_{H_{\H}^{1}}^{p-1})\|U-V\|_{H_{\H}^{1}}.
\end{split}
\end{equation}

We note that using the definition of Sobolev spaces in \eqref{EQ:HsL}, in this notation we have
\begin{equation}\label{EQ:Uu}
\|U\|_{H_{\H}^{1}}\simeq \|\H^{1/2}u\|_{\Sp}+\|\H^{1/2}\partial_t u\|_{\Sp}+\|\H u\|_{\Sp}.
\end{equation}

An example of $F$ satisfying \eqref{PR: f-02} may be given by nonlinearities of the form
\begin{equation}\label{EQ:exnon}
F(U)=\varphi \|U\|_{\Sp}^p \quad \textrm{ or }\quad F(U)=\varphi \|U\|_{H_{\H}^{1}}^p,
\end{equation}
for some $\varphi\in {\rm Dom}\,(\H)$.

We now give the global in time well-posedness statement.

\begin{thm} \label{TH: 02}
Let $p>1$ be $(F,\H)$-admissible, i.e. assume that $F=F(u, \partial_{t}u, \H^{1/2} u)$ satisfies the condition \eqref{PR: f-02}.
Suppose that $F(0)=0$, and that $u_{0}\in H_{\H}^{2}$ and $u_{1}\in H_{\H}^{1}$ are such that
\begin{equation*} \label{EQ: Th-cond-01}
\|u_{0}\|_{H_{\H}^{2}}+\|u_{1}\|_{H_{\H}^{1}}\leq\varepsilon.
\end{equation*}
Assume that $\lambda_0\geq 0$ and $\lambda_0+m>0$.
Then, there exists a small positive constant $\varepsilon_{0}>0$ such that the Cauchy problem \begin{equation*}\label{EQ: NoL-02}
\left\{ \begin{split}
\partial_{t}^{2}u(t)+\H u(t)+b\partial_{t}u(t)+m u(t)&=F(u, \partial_{t}u, \H^{1/2} u), \quad t>0,\\
u(0)&=u_{0}\in H_{\H}^{2}, \\
\partial_{t}u(0)&=u_{1}\in H_{\H}^{1},
\end{split}
\right.
\end{equation*}
has a unique global solution $u\in C(\mathbb R_{+}; H_{\H}^{2})\cap C^{1}(\mathbb R_{+}; H_{\H}^{1})$ for all $0<\varepsilon\leq\varepsilon_{0}$.

Moreover, for $0 < b < 2\sqrt{\lambda_{0}+m}$ we have
\begin{equation}\label{TH-NoL-1-01b}
\|\partial_{t}^{\alpha}\H^{\beta}u(t)\|_{\Sp}\lesssim (1+t)^{1/2}\e[-\frac{b}{2}t],
\end{equation}
and for $b = 2\sqrt{\lambda_{0}+m}$ we have
\begin{equation}\label{TH-NoL-1-02b}
\|\partial_{t}^{\alpha}\H^{\beta}u(t)\|_{\Sp}\lesssim (1+t)^{3/2}\e[-\frac{b}{2}t],
\end{equation}
and for $2\sqrt{\lambda_{0}+m}<b$ we have
\begin{equation}\label{TH-NoL-1-03b} \|\partial_{t}^{\alpha}\H^{\beta}u(t)\|_{\Sp}\lesssim (1+t)^{1/2} \e[-(\frac{b}{2}-\sqrt{\frac{b^{2}}{4}-\lambda_{0}-m})t],
\end{equation}
for any $(\alpha, \beta)\in\{(0, 0), (0, 1/2), (1, 0), (0, 1), (1, 1/2), (2, 0)\}$.
\end{thm}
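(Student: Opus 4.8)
The plan is to mirror the contraction-mapping scheme used for Theorem \ref{TH: 01}, but carried out one level higher in the Sobolev scale, so that the admissibility condition \eqref{PR: f-02} may be applied directly in $H_{\H}^{1}$ rather than through the Gagliardo--Nirenberg inequality. First I would introduce closed balls $Z_{j}$, $j=1,2,3$, corresponding to the three regimes $0<b<2\sqrt{\lambda_{0}+m}$, $b=2\sqrt{\lambda_{0}+m}$, and $2\sqrt{\lambda_{0}+m}<b$, inside $C(\mathbb R_{+};H_{\H}^{2})\cap C^{1}(\mathbb R_{+};H_{\H}^{1})$, equipped with weighted sup-norms analogous to $\|\cdot\|_{Z_{j}}$ from the proof of Theorem \ref{TH: 01} but now containing all the derivatives $\partial_{t}^{\alpha}\H^{\beta}u$ for the six pairs $(\alpha,\beta)\in\{(0,0),(0,1/2),(1,0),(0,1),(1,1/2),(2,0)\}$ appearing in the statement, each tempered by the same exponential and polynomial weight as in the corresponding regime of Proposition \ref{LEM: Est-01}. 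On each $Z_{j}$ I would study the Duhamel map
\[
\Gamma[u](t):=K_{0}(t)\ast_{\H}u_{0}+K_{1}(t)\ast_{\H}u_{1}+\int_{0}^{t}K_{1}(t-\tau)\ast_{\H}F(U(\tau))\,d\tau,
\]
with $U=(u,\partial_{t}u,\H^{1/2}u)$, and prove the two inequalities $\|\Gamma[u]\|_{Z_{j}}\leq L_{j}$ and $\|\Gamma[u]-\Gamma[v]\|_{Z_{j}}\leq r_{j}^{-1}\|u-v\|_{Z_{j}}$ with $r_{j}>1$, so that the Banach fixed point theorem yields the unique global solution.

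The central step is to apply Proposition \ref{LEM: Est-01} to each term $\partial_{t}^{\alpha}\H^{\beta}$ of the Duhamel integral, using the Cauchy--Schwarz manipulation as in the derivation of \eqref{OP: Int-NoL-03}--\eqref{OP: Int-NoL-03b} to bring the time integral inside, which reduces everything to controlling $\|F(U(\tau))\|_{H_{\H}^{\alpha-1+2\beta}}$. Running over the six pairs, the exponent $\alpha-1+2\beta$ never exceeds $1$ and attains $1$ exactly at $(0,1)$, $(1,1/2)$, and $(2,0)$; hence I only ever need $\|F(U(\tau))\|_{H_{\H}^{1}}$, which is precisely the norm in which the admissibility hypothesis is phrased. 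Taking $V=0$ in \eqref{PR: f-02} and using $F(0)=0$ gives $\|F(U(\tau))\|_{H_{\H}^{1}}\lesssim\|U(\tau)\|_{H_{\H}^{1}}^{p}$, and the equivalence \eqref{EQ:Uu} bounds the right-hand side by $(\|\H^{1/2}u\|_{2}+\|\H^{1/2}\partial_{t}u\|_{2}+\|\H u\|_{2})^{p}$, each factor of which is dominated by the corresponding $Z_{j}$-weight times $L_{j}$. Substituting these bounds and evaluating the elementary time integrals in each regime reproduces, after multiplying by the prefactor $t^{1/2}\e[-\frac{b}{2}t]$ (or its analogue in the critical and over-damped cases), the weights defining $Z_{j}$ together with an overall factor $L_{j}^{p}$; the Lipschitz estimate follows in the same way with $L_{j}^{p-1}\|u-v\|_{Z_{j}}$.

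Finally I would choose $L_{j}:=r_{j}C_{1j}(\|u_{0}\|_{H_{\H}^{2}}+\|u_{1}\|_{H_{\H}^{1}})$ and impose smallness of the data exactly as in \eqref{Gamma: Contraction mapping-01}--\eqref{Gamma: Contraction mapping-03}, so that the superlinear terms $C_{2j}L_{j}^{p}$ and $C_{3j}L_{j}^{p-1}$ are absorbed into the contraction constants; the decay estimates \eqref{TH-NoL-1-01b}--\eqref{TH-NoL-1-03b} then read off from membership in $Z_{j}$. The main obstacle I anticipate is the bookkeeping at the top regularity level: I must check that the data regularity required by Proposition \ref{LEM: Est-01} for the pairs $(0,1),(1,1/2),(2,0)$, namely $u_{0}\in H_{\H}^{\alpha+2\beta}=H_{\H}^{2}$ and $u_{1}\in H_{\H}^{\alpha-1+2\beta}=H_{\H}^{1}$, is matched by the smallness hypothesis on $\|u_{0}\|_{H_{\H}^{2}}+\|u_{1}\|_{H_{\H}^{1}}$, and that the source $F(U(\tau))$ genuinely lies in $H_{\H}^{1}$ (guaranteed by the standing assumption $F(U)\in C(\mathbb R_{+};H_{\H}^{1})$) so that the Duhamel integral is well defined at this level. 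Everything else is a routine repetition of the argument for Theorem \ref{TH: 01}, the single structural change being that the Gagliardo--Nirenberg step is replaced by the direct $H_{\H}^{1}$ bound \eqref{PR: f-02}.
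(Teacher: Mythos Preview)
Your proposal is correct and follows essentially the same route as the paper: both replace the Gagliardo--Nirenberg step of Theorem \ref{TH: 01} by a direct appeal to the $H_{\H}^{1}$-admissibility condition \eqref{PR: f-02}, define weighted balls $Z_{j}$ encompassing the six derivative pairs, and run the contraction argument after observing that $\alpha-1+2\beta\leq 1$ forces only the $H_{\H}^{1}$-norm of $F(U)$ to appear in the Duhamel estimates. The paper's write-up differs only cosmetically (it labels the balls $Z_{4},Z_{5},Z_{6}$ and phrases the ambient space as $C^{2}(\mathbb R_{+};H_{\H}^{2})$), and your explicit bookkeeping check on the data regularity at the top pairs is a welcome clarification.
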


\begin{proof} The proof of Theorem \ref{TH: 02} is similar to that of Theorem \ref{TH: 01} except that we aim at using the assumption \eqref{PR: f-02} instead of the Gagliardo-Nirenberg inequality.
First, we define the closed subsets $Z_j$ of the space $C^{2}(\mathbb R_{+}; \,\, H^{2}_{\H})$ by
$$
Z_{j} :=\{u\in C^{2}(\mathbb R_{+}; \,\, H^{2}_{\H}); \,\, \|u\|_{Z_{j}}\leq L_{j}\}, \,\,\, j=4,5,6,
$$
with
\begin{equation*}
\begin{split}
\|u\|_{Z_{4}}:=\sup_{t\geq0}\{&(1+t)^{-1/2}\e[\frac{b}{2}t](\|u(t, \cdot)\|_{2}+\|\partial_{t}u(t, \cdot)\|_{2}+\|\H^{1/2}u(t, \cdot)\|_{2}\\
&+\|\partial_{t}\H^{1/2}u(t, \cdot)\|_{2}+\|\H u(t, \cdot)\|_{2}+\|\partial_{t}^{2}u(t, \cdot)\|_{2})\}, \,\,\, \hbox{if} \,\,\, 0 < b < 2\sqrt{\lambda_{0}+m},
\end{split}
\end{equation*}
and
\begin{equation*}
\begin{split}
\|u\|_{Z_{5}}:=\sup_{t\geq0}\{&(1+t)^{-3/2}\e[\frac{b}{2}t](\|u(t, \cdot)\|_{2}+\|\partial_{t}u(t, \cdot)\|_{2}+\|\H^{1/2}u(t, \cdot)\|_{2}\\
&+\|\partial_{t}\H^{1/2}u(t, \cdot)\|_{2}+\|\H u(t, \cdot)\|_{2}+\|\partial_{t}^{2}u(t, \cdot)\|_{2})\}, \,\,\, \hbox{if} \,\,\, b = 2\sqrt{\lambda_{0}+m},
\end{split}
\end{equation*}
and
\begin{equation*}
\begin{split}
\|u\|_{Z_{6}}:=\sup_{t\geq0}\{&(1+t)^{-1/2}\e[(\frac{b}{2}-\sqrt{\frac{b^{2}}{4}-\lambda_{0}-m})t](\|u(t, \cdot)\|_{2}+\|\partial_{t}u(t, \cdot)\|_{2}+\|\H^{1/2}u(t, \cdot)\|_{2}\\
&+\|\partial_{t}\H^{1/2}u(t, \cdot)\|_{2}+\|\H u(t, \cdot)\|_{2}+\|\partial_{t}^{2}u(t, \cdot)\|_{2})\}, \,\,\, \hbox{if} \,\,\, 2\sqrt{\lambda_{0}+m}<b,
\end{split}
\end{equation*}
where $L_{j}>0$ ($j=4,5,6$) are to be specified later.

Now, we begin by repeating several steps  from the proof of Theorem \ref{TH: 01}, namely, we define the mapping $\Gamma$ on $Z_4$, $Z_5$ and $Z_6$ by
\begin{equation}
\label{MAP-01b}
\begin{split}
\Gamma[u](t):= K_{0}(t)\ast_{\H}u_{0} &+ K_{1}(t)\ast_{\H}u_{1}\\
&+\int_{0}^{t}K_{1}(t-\tau)\ast_{\H}F(u, u_{t}, \H^{1/2}u)(\tau)d\tau,
\end{split}
\end{equation}
and we show that $\Gamma$ is a contraction mapping on $Z_4$, $Z_5$ and $Z_6$. By \eqref{PR: f-02} we have
\begin{equation} \label{PR: f-02b}
\begin{split}
\|F(U)-F(V)\|_{H^{1}_{\H}}\lesssim (\|U\|_{H^{1}_{\H}}^{p-1}+\|V\|_{H^{1}_{\H}}^{p-1})\|U-V\|_{H^{1}_{\H}}.
\end{split}
\end{equation}
We take $u$ and $v$ satisfying $\|u\|_{Z_{j}}\leq L_{j}$ and $\|v\|_{Z_{j}}\leq L_{j}$ for $j=4,5,6$.
Recalling \eqref{EQ:Uu} for $U=(u,\partial_t u,\H^{1/2}u)$,
from \eqref{PR: f-02b} we get
\begin{equation}
\label{EQ: Gagliardo-Nirenberg-02-cc}
\|(F(U)-F(V))(t, \cdot)\|_{H^{1}_{\H}}  \leq C (1+t)^{p/2} \e[-\frac{b}{2} p t] L_{4}^{p-1} \|u-v\|_{Z_{4}},
\end{equation}
and
\begin{equation}
\label{EQ: Gagliardo-Nirenberg-02a-cc}
\|(F(U)-F(V))(t, \cdot)\|_{H^{1}_{\H}}  \leq C (1+t)^{3p/2} \e[-\frac{b}{2} p t] L_{5}^{p-1} \|u-v\|_{Z_{5}},
\end{equation}
and
\begin{equation}
\label{EQ: Gagliardo-Nirenberg-02b-cc}
\|(F(U)-F(V))(t, \cdot)\|_{H^{1}_{\H}}  \leq C (1+t)^{p/2} \e[-(\frac{b}{2}-\sqrt{\frac{b^{2}}{4}-\lambda_{0}-m}) p t] L_{6}^{p-1} \|u-v\|_{Z_{6}},
\end{equation}
respectively.
Since $F(0)=0$, by putting $v=0$ and $V=0$ in \eqref{EQ: Gagliardo-Nirenberg-02-cc}--\eqref{EQ: Gagliardo-Nirenberg-02b-cc}, we obtain
\begin{equation}
\label{EQ: Gagliardo-Nirenberg-03-cc}
\|F(U)(t, \cdot)\|_{H^{1}_{\H}}  \leq C (1+t)^{p/2} \e[-\frac{b}{2} p t] L_{4}^{p},
\end{equation}
and
\begin{equation}
\label{EQ: Gagliardo-Nirenberg-03a-cc}
\|F(U)(t, \cdot)\|_{H^{1}_{\H}}  \leq C (1+t)^{3p/2} \e[-\frac{b}{2} p t] L_{5}^{p},
\end{equation}
and
\begin{equation}
\label{EQ: Gagliardo-Nirenberg-03b-cc}
\|F(U)(t, \cdot)\|_{H^{1}_{\H}}  \leq C (1+t)^{p/2} \e[-(\frac{b}{2}-\sqrt{\frac{b^{2}}{4}-\lambda_{0}-m}) p t] L_{6}^{p},
\end{equation}
respectively.
As in the proof of Theorem \ref{TH: 01}, in view of Proposition \ref{LEM: Est-01}, for the integral operator
\begin{equation}
\label{OP: Int-NoL-01-cc}
\begin{split}
J[u](t, x):=\int_{0}^{t}K_{1}(t-\tau)\ast_{\H}F(u(\tau, x), u_t(\tau, x), \H^{1/2} u(\tau, x))d\tau,
\end{split}
\end{equation}
for $0<b< 2\sqrt{\lambda_{0}+m}$ we have
\begin{equation}
\label{OP: Int-NoL-03-cc}
\begin{split}
&\|\partial^{\alpha}_{t}\H^{\beta} J[u](t, \cdot)\|_{2}^{2} \leq t \int_{0}^{t}\| \partial^{\alpha}_{t}\H^{\beta} K_{1}(t-\tau)\ast_{\H}F(u, u_t, \H^{1/2} u)(\tau, \cdot)  \|_{2}^{2} d \tau \\
&\leq C t \e[- b t] \int_{0}^{t} \e[  b \tau] \|F(u, u_t, \H^{1/2} u)(\tau, \cdot) \|_{H^{\alpha-1+2\beta}_{\H}}^{2} d \tau.
\end{split}
\end{equation}
Similarly, for $b=2\sqrt{\lambda_{0}+m}$ we get
\begin{equation}
\label{OP: Int-NoL-03a-cc}
\begin{split}
\|\partial^{\alpha}_{t}\H^{\beta} J[u](t, \cdot)\|_{2}^{2} \leq  C t (1+t)^{2} \e[- b t] \int_{0}^{t} \e[  b \tau] \|F(u, u_t, \H^{1/2} u)(\tau, \cdot) \|_{H^{\alpha-1+2\beta}_{\H}}^{2} d \tau.
\end{split}
\end{equation}
Also, for $2\sqrt{\lambda_{0}+m}<b$ we obtain
\begin{equation}
\label{OP: Int-NoL-03b-cc}
\begin{split}
\|\partial^{\alpha}_{t}\H^{\beta} J[u](t, \cdot)&\|_{2}^{2} \leq C t \e[- 2(\frac{b}{2}-\sqrt{\frac{b^{2}}{4}-\lambda_{0}-m}) t] \\
&\times\int_{0}^{t} \e[  2(\frac{b}{2}-\sqrt{\frac{b^{2}}{4}-\lambda_{0}-m}) \tau] \|F(u, u_t, \H^{1/2} u)(\tau, \cdot) \|_{H^{\alpha-1+2\beta}_{\H}}^{2} d \tau
\end{split}
\end{equation}
for $(\alpha, \beta)\in\{(0, 0), (0, 1/2), (1, 0), (0, 1), (1, 1/2), (2, 0)\}$.

Now, combining the previous discussions with the estimates \eqref{EQ: Gagliardo-Nirenberg-02-cc}--\eqref{EQ: Gagliardo-Nirenberg-03b-cc}, we complete the proof of  Theorem \ref{TH: 02}.
\end{proof}

\section{Higher order nonlinearities}
\label{SEC:higher}

In this section
 we briefly indicate how the obtained results can be extended to higher order nonlinearities $F_l:{\mathbb C}^{2l+1}\to {\mathbb C}$, $l\in\mathbb N$, in the form
\begin{equation}\label{EQ:F-gen}
F_l=F_l(u,\{\partial^j_t u\}_{j=1}^l, \{{\mathcal L}^{j/2} u\}_{j=1}^{l}).
\end{equation}
Denoting $$U:=(u,\{\partial^j_t u\}_{j=1}^l, \{{\mathcal L}^{j/2} u\}_{j=1}^{l})$$ for $u\in C^{l}(\mathbb R_{+}; H_{\H}^{l})$, we assume that $F_l(U)\in C(\mathbb R_{+}; H_{\H}^{l})$ and we say that the index $p>1$ is $(F_l,\H)$-admissible if we have the inequality
\begin{equation} \label{PR: f-02m}
\begin{split}
\|F_l(U)-F_l(V)\|_{H_{\H}^{l}}\lesssim (\|U\|_{H_{\H}^{1}}^{p-1}+ \|V\|_{H_{\H}^{1}}^{p-1})\|U-V\|_{H_{\H}^{1}}.
\end{split}
\end{equation}
An example of $F=F_l$ satisfying \eqref{PR: f-02m} may be given by nonlinearities of the form
\begin{equation}\label{EQ:exnonh}
F(U)=\varphi \|U\|_{\Sp}^p \quad \textrm{ or }\quad F(U)=\varphi \|U\|_{H_{\H}^{1}}^p,
\end{equation}
for some $\varphi\in H_\H^{1}$.

\begin{thm} \label{TH: 0g}
Let $l\in\mathbb N$. Assume that $\lambda_0\geq0$ and $\lambda_0+m>0$, where $\lambda_0=\lambda_0(\H)$ is the bottom of the spectrum of $\H$. Let $p>1$ be such that $F_l$ as in \eqref{EQ:F-gen} satisfies the condition \eqref{PR: f-02m}.
Suppose that $F(0)=0$, and that $u_{0}\in H_{\H}^{l+1}$ and $u_{1}\in H_{\H}^{l}$ are such that
\begin{equation*} \label{EQ: Th-cond-01m}
\|u_{0}\|_{H_{\H}^{l+1}}+\|u_{1}\|_{H_{\H}^{l}}\leq\varepsilon.
\end{equation*}
Then, there exists a small positive constant $\varepsilon_{0}>0$ such that the Cauchy problem \begin{equation*}\label{EQ: NoL-02m}
\left\{ \begin{split}
\partial_{t}^{2}u(t)+\H u(t)+\partial_{t}u(t)+m u(t)&=F_l(u,\{\partial^j u\}_{j=1}^l, \{{\mathcal L}^{j/2} u\}_{j=1}^{l}), \quad t>0,\\
u(0)&=u_{0}\in H_{\H}^{l+1}, \\
\partial_{t}u(0)&=u_{1}\in H_{\H}^{l},
\end{split}
\right.
\end{equation*}
has a unique global solution $u\in C(\mathbb R_{+}; H_{\H}^{l+1})\cap C^{1}(\mathbb R_{+}; H_{\H}^{l})$ for all $0<\varepsilon\leq\varepsilon_{0}$.

Moreover, for $0<b< 2\sqrt{\lambda_{0}+m}$ we have
\begin{equation}\label{TH-NoL-1-01bm}
\|\partial_{t}^{\alpha}\H^{\beta}u(t)\|_{\Sp}\lesssim (1+t)^{1/2}\e[-\frac{b}{2}t],
\end{equation}
for $b = 2\sqrt{\lambda_{0}+m}$ we have
\begin{equation}\label{TH-NoL-1-02bm}
\|\partial_{t}^{\alpha}\H^{\beta}u(t)\|_{\Sp}\lesssim (1+t)^{3/2}\e[-\frac{b}{2}t],
\end{equation}
and for $2\sqrt{\lambda_{0}+m}<b$ we have
\begin{equation}\label{TH-NoL-1-03bm}
\|\partial_{t}^{\alpha}\H^{\beta}u(t)\|_{\Sp}\lesssim (1+t)^{1/2} \e[-(\frac{b}{2}-\sqrt{\frac{b^{2}}{4}-\lambda_{0}-m})t],
\end{equation}
for any $(\alpha, \beta)\in \mathbb N_0\times\frac12\mathbb N_0$ with $\alpha+2\beta\leq l+1$.
\end{thm}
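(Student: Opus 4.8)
The plan is to run the contraction-mapping argument of Theorem \ref{TH: 02} essentially verbatim, changing only the function spaces and the bookkeeping of derivative indices; indeed, for $l=1$ the construction below reduces exactly to that of Section \ref{S: n+2}. First I would introduce, in direct analogy with the spaces $Z_4, Z_5, Z_6$, three closed subsets $Z_7, Z_8, Z_9$ of $C^l(\mathbb R_+; H^{l+1}_\H)$ carrying the weighted norms
\[
\|u\|_{Z_j} := \sup_{t \geq 0} \Big\{ w_j(t) \sum_{\alpha + 2\beta \leq l+1} \|\partial_t^\alpha \H^\beta u(t, \cdot)\|_2 \Big\},
\]
where the sum runs over $(\alpha,\beta)\in\mathbb N_0\times\frac12\mathbb N_0$ and the weight $w_j(t)$ equals $(1+t)^{-1/2}\e[\frac{b}{2}t]$, $(1+t)^{-3/2}\e[\frac{b}{2}t]$, and $(1+t)^{-1/2}\e[(\frac{b}{2}-\sqrt{\frac{b^2}{4}-\lambda_0-m})t]$ in the three regimes $0<b<2\sqrt{\lambda_0+m}$, $b=2\sqrt{\lambda_0+m}$, and $b>2\sqrt{\lambda_0+m}$, respectively. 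On each $Z_j$ I would define the same Duhamel map $\Gamma$ as in \eqref{MAP-01b}, with $F$ replaced by $F_l$, and seek its unique fixed point via the Banach theorem.

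The decisive point is the interplay between the admissibility condition \eqref{PR: f-02m} and Proposition \ref{LEM: Est-01}. To control $\partial_t^\alpha \H^\beta J[u]$ via Proposition \ref{LEM: Est-01}, the source $F_l(U)$ must be measured in $H^{\alpha-1+2\beta}_\H$; but since we only consider indices with $\alpha + 2\beta \leq l+1$, we have $\alpha - 1 + 2\beta \leq l$, and using $\lambda_0+m>0$ (so that the Sobolev norms are monotone in the regularity index) we may bound $\|F_l(U)\|_{H^{\alpha-1+2\beta}_\H}$ by $\|F_l(U)\|_{H^l_\H}$. The latter is exactly the quantity supplied by \eqref{PR: f-02m}, whose right-hand side involves only the $H^1_\H$-norms of $U$, $V$ and $U-V$. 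Since $\|U\|_{H^1_\H}$ consists of quantities $\partial_t^\alpha\H^\beta u$ with $\alpha+2\beta\leq l+1$, all of which are controlled by $\|u\|_{Z_j}$, feeding the $Z_j$-bounds into \eqref{PR: f-02m} reproduces the decay factors $(1+t)^{p/2}\e[-\frac{b}{2}pt]$ (and its two analogues) exactly as in \eqref{EQ: Gagliardo-Nirenberg-02-cc}--\eqref{EQ: Gagliardo-Nirenberg-03b-cc}.

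With these two ingredients in place, the Duhamel estimates \eqref{OP: Int-NoL-03-cc}--\eqref{OP: Int-NoL-03b-cc} carry over for every admissible pair $(\alpha,\beta)$: Cauchy--Schwarz in $\tau$, the linear decay from Proposition \ref{LEM: Est-01}, and the nonlinear bound just described yield $\|J[u]\|_{Z_j}\lesssim L_j^p$ and $\|J[u]-J[v]\|_{Z_j}\lesssim L_j^{p-1}\|u-v\|_{Z_j}$. Combining with the homogeneous estimate of Proposition \ref{LEM: Est-01} applied to $u_0\in H^{l+1}_\H$, $u_1\in H^l_\H$ gives $\|\Gamma[u]\|_{Z_j}\leq C_{1j}\varepsilon + C_{2j}L_j^p$ and $\|\Gamma[u]-\Gamma[v]\|_{Z_j}\leq C_{3j}L_j^{p-1}\|u-v\|_{Z_j}$ as in \eqref{Gamma: Contraction mapping-01}--\eqref{Gamma: Contraction mapping-02}; choosing $L_j=r_jC_{1j}\varepsilon$ with $r_j>1$ and $\varepsilon$ small makes $\Gamma$ a contraction self-map, and the fixed point yields the global solution together with \eqref{TH-NoL-1-01bm}--\eqref{TH-NoL-1-03bm}. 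The one substantive point beyond Theorem \ref{TH: 02} --- and hence the place I would be most careful --- is the index arithmetic: one must verify that for every $(\alpha,\beta)$ with $\alpha+2\beta\leq l+1$ the required source regularity $\alpha-1+2\beta$ never exceeds $l$, so that the single estimate \eqref{PR: f-02m}, with its gain from the $H^l_\H$ output down to the $H^1_\H$ input, closes all of the higher-order bounds without loss. Everything else is a routine transcription of Section \ref{S: n+2}.
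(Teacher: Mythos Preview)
Your proposal is correct and follows essentially the same argument as the paper's proof: the paper likewise defines weighted closed balls (labelled $Z_{3l+1}, Z_{3l+2}, Z_{3l+3}$ rather than your $Z_7, Z_8, Z_9$) with the same norms summed over $\alpha+2\beta\leq l+1$, applies \eqref{PR: f-02m} to bound $\|F_l(U)-F_l(V)\|_{H^l_\H}$, and then uses Proposition \ref{LEM: Est-01} together with the observation $\alpha-1+2\beta\leq l$ to close the Duhamel estimates exactly as you describe. Your emphasis on the index arithmetic as the one substantive point beyond Theorem \ref{TH: 02} is apt; one small slip is that the monotonicity of the $H^s_\H$-scale is governed by $\lambda_0>0$ rather than $\lambda_0+m>0$, though the paper itself passes over this point without comment.
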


\begin{proof} The proof of Theorem \ref{TH: 0g} is similar to that of Theorem \ref{TH: 02} except that we aim at using more general assumption \eqref{PR: f-02m} instead of the assumption \eqref{PR: f-02}.
Analogously, define the closed subsets $Z_j$ of the space $C^{l+1}(\mathbb R_{+}; \,\, H^{l+1}_{\H})$ by
$$
Z_{j} :=\{u\in C^{l+1}(\mathbb R_{+}; \,\, H^{l+1}_{\H}); \,\, \|u\|_{Z_{j}}\leq L_{j}\}, \,\,\, j=3l+1,3l+2,3l+3
$$
with
\begin{equation*}
\begin{split}
\|u\|_{Z_{3l+1}}:=\sup_{t\geq0}\{(1+t)^{-1/2}\e[\frac{b}{2}t]\left(\sum_{(\alpha, \beta)\in \mathbb N_0\times\frac12\mathbb N_0}^{\alpha+2\beta\leq l+1}\|\partial_{t}^{\alpha}\H^{\beta}u(t, \cdot)\|_{2}\right)\},
\end{split}
\end{equation*}
if $0<b< 2\sqrt{\lambda_{0}+m}$, and
\begin{equation*}
\begin{split}
\|u\|_{Z_{3l+2}}:=\sup_{t\geq0}\{(1+t)^{-3/2}\e[\frac{b}{2}t]\left(\sum_{(\alpha, \beta)\in \mathbb N_0\times\frac12\mathbb N_0}^{\alpha+2\beta\leq l+1}\|\partial_{t}^{\alpha}\H^{\beta}u(t, \cdot)\|_{2}\right)\},
\end{split}
\end{equation*}
if $b = 2\sqrt{\lambda_{0}+m}$, and
\begin{equation*}
\begin{split}
\|u\|_{Z_{3l+3}}:=\sup_{t\geq0}\{(1+t)^{-1/2}\e[(\frac{b}{2}-\sqrt{\frac{b^{2}}{4}-\lambda_{0}-m})t]\left(\sum_{(\alpha, \beta)\in \mathbb N_0\times\frac12\mathbb N_0}^{\alpha+2\beta\leq l+1}\|\partial_{t}^{\alpha}\H^{\beta}u(t, \cdot)\|_{2}\right)\},
\end{split}
\end{equation*}
if $2\sqrt{\lambda_{0}+m}<b$, where $L_{j}>0$ ($j=3l+1, 3l+2, 3l+3$) are to be defined.

As in the proof of Theorem \ref{TH: 02}, we define the mapping $\Gamma_l$ on $Z_{3l+1}$, $Z_{3l+2}$ and $Z_{3l+3}$ by
\begin{equation}
\label{MAP-01b-g}
\begin{split}
\Gamma_l[u](t):= K_{0}(t)\ast_{\H}u_{0} &+ K_{1}(t)\ast_{\H}u_{1}\\
&+\int_{0}^{t}K_{1}(t-\tau)\ast_{\H}F_l(u,\{\partial^j u\}_{j=1}^l, \{{\mathcal L}^{j/2} u\}_{j=1}^{l})(\tau)d\tau.
\end{split}
\end{equation}
In addition, we establish that $\Gamma_l$ is a contraction mapping on $Z_{3l+1}$, $Z_{3l+2}$ and $Z_{3l+3}$. By \eqref{PR: f-02m} we have
\begin{equation} \label{PR: f-02bg}
\begin{split}
\|F_l(U)-F_l(V)\|_{H_{\H}^{l}}\lesssim (\|U\|_{H_{\H}^{1}}^{p-1}+ \|V\|_{H_{\H}^{1}}^{p-1})\|U-V\|_{H_{\H}^{1}}.
\end{split}
\end{equation}

Take $u$ and $v$ such that $\|u\|_{Z_{j}}\leq L_{j}$ and $\|v\|_{Z_{j}}\leq L_{j}$ for $j=3l+1, 3l+2, 3l+3$.
Then for $U=(u,\{\partial^j u\}_{j=1}^l, \{{\mathcal L}^{j/2} u\}_{j=1}^{l})$,
from \eqref{PR: f-02bg} we have
\begin{equation}
\label{EQ: Gagliardo-Nirenberg-02-ccg}
\|(F(U)-F(V))(t, \cdot)\|_{H^{l}_{\H}}  \leq C (1+t)^{p/2} \e[-\frac{b}{2} p t] L_{3l+1}^{p-1} \|u-v\|_{Z_{3l+1}},
\end{equation}
and
\begin{equation}
\label{EQ: Gagliardo-Nirenberg-02a-ccg}
\|(F(U)-F(V))(t, \cdot)\|_{H^{l}_{\H}}  \leq C (1+t)^{3p/2} \e[-\frac{b}{2} p t] L_{3l+2}^{p-1} \|u-v\|_{Z_{3l+2}},
\end{equation}
and
\begin{equation}
\label{EQ: Gagliardo-Nirenberg-02b-ccg}
\|(F(U)-F(V))(t, \cdot)\|_{H^{l}_{\H}}  \leq C (1+t)^{p/2} \e[-(\frac{b}{2}-\sqrt{\frac{b^{2}}{4}-\lambda_{0}-m}) p t] L_{3l+3}^{p-1} \|u-v\|_{Z_{3l+3}},
\end{equation}
respectively.
Due to $F(0)=0$, by substituting $V=0$ in \eqref{EQ: Gagliardo-Nirenberg-02-ccg}--\eqref{EQ: Gagliardo-Nirenberg-02b-ccg}, we get
\begin{equation}
\label{EQ: Gagliardo-Nirenberg-03-ccg}
\|F(U)(t, \cdot)\|_{H^{l}_{\H}}  \leq C (1+t)^{p/2} \e[-\frac{b}{2} p t] L_{3l+1}^{p},
\end{equation}
and
\begin{equation}
\label{EQ: Gagliardo-Nirenberg-03a-ccg}
\|F(U)(t, \cdot)\|_{H^{l}_{\H}}  \leq C (1+t)^{3p/2} \e[-\frac{b}{2} p t] L_{3l+2}^{p},
\end{equation}
and
\begin{equation}
\label{EQ: Gagliardo-Nirenberg-03b-ccg}
\|F(U)(t, \cdot)\|_{H^{l}_{\H}}  \leq C (1+t)^{p/2} \e[-(\frac{b}{2}-\sqrt{\frac{b^{2}}{4}-\lambda_{0}-m}) p t] L_{3l+3}^{p},
\end{equation}
respectively.
Repeating the proof of the above theorem, from the point of view of Proposition \ref{LEM: Est-01}, for the operator
\begin{equation}
\label{OP: Int-NoL-01-ccg}
\begin{split}
J_l[u](t, x):=\int_{0}^{t}K_{1}(t-\tau)\ast_{\H}F_l(u,\{\partial^j u\}_{j=1}^l, \{{\mathcal L}^{j/2} u\}_{j=1}^{l})(\tau, x)d\tau,
\end{split}
\end{equation}
for $0<b< 2\sqrt{\lambda_{0}+m}$ we obtain
\begin{equation}
\label{OP: Int-NoL-03-ccg}
\begin{split}
\|\partial^{\alpha}_{t}&\H^{\beta} J_l[u](t, \cdot)\|_{\Sp}^{2} \\
&\leq t \int_{0}^{t}\| \partial^{\alpha}_{t}\H^{\beta} K_{1}(t-\tau)\ast_{\H}F_l(u,\{\partial^j u\}_{j=1}^l, \{{\mathcal L}^{j/2} u\}_{j=1}^{l})(\tau, \cdot)  \|_{\Sp}^{2} d \tau \\
&\leq C t \e[- b t] \int_{0}^{t} \e[  b \tau] \|F_l(u,\{\partial^j u\}_{j=1}^l, \{{\mathcal L}^{j/2} u\}_{j=1}^{l})(\tau, \cdot) \|_{H^{\alpha-1+2\beta}_{\H}}^{2} d \tau.
\end{split}
\end{equation}
Similarly, for $b=2\sqrt{\lambda_{0}+m}$ we have
\begin{equation}
\label{OP: Int-NoL-03a-ccg}
\begin{split}
\|\partial^{\alpha}_{t}&\H^{\beta} J_l[u](t, \cdot)\|_{2}^{2} \\
  &\leq C t (1+t)^{2} \e[- b t] \int_{0}^{t} \e[  b \tau] \|F_l(u,\{\partial^j u\}_{j=1}^l, \{{\mathcal L}^{j/2} u\}_{j=1}^{l})(\tau, \cdot) \|_{H^{\alpha-1+2\beta}_{\H}}^{2} d \tau.
\end{split}
\end{equation}
Also, for $2\sqrt{\lambda_{0}+m}<b$ we get
\begin{equation}
\label{OP: Int-NoL-03b-ccg}
\begin{split}
\|\partial^{\alpha}_{t}&\H^{\beta} J_l[u](t, \cdot)\|_{2}^{2} \leq C t \e[- 2(\frac{b}{2}-\sqrt{\frac{b^{2}}{4}-\lambda_{0}-m}) t] \\
&\times\int_{0}^{t} \e[  2(\frac{b}{2}-\sqrt{\frac{b^{2}}{4}-\lambda_{0}-m}) \tau] \|F_l(u,\{\partial^j u\}_{j=1}^l, \{{\mathcal L}^{j/2} u\}_{j=1}^{l})(\tau, \cdot) \|_{H^{\alpha-1+2\beta}_{\H}}^{2} d \tau
\end{split}
\end{equation}
for $(\alpha, \beta)\in \mathbb N_0\times\frac12\mathbb N_0$ with $\alpha+2\beta\leq l+1$.

Now, repeating the proof of Theorem \ref{TH: 01} step by step and, taking into account the assumption \eqref{PR: f-02m}, we can finish to prove Theorem \ref{TH: 0g}.
\end{proof}

\appendix
\section{Harmonic oscillator}
\label{APP}

In this appendix we briefly illustrate some constructions of this paper related to the Fourier analysis in the example of the harmonic oscillator \eqref{EQ:ho}, i.e. for
$\Sp=L^2(\Rn)$ and
\begin{equation}\label{EQ:ho-ap}
\H:=-\Delta+|x|^{2}, \,\,\, x\in\mathbb R^{n}.
\end{equation}
In this case we have the bottom of the spectrum $\lambda_{0}=n$.
The operator $\H$ in \eqref{EQ:ho-ap} is essentially self-adjoint on $C_{0}^{\infty}(\mathbb R^{n})$ with eigenvalues
$$
\lambda_{k}=\sum_{j=1}^{n}(2k_{j}+1), \,\,\, k=(k_{1}, \ldots, k_{n})\in\mathbb N_0^{n},
$$
and with eigenfunctions
$$
e_{k}(x)=\prod_{j=1}^{n}P_{k_{j}}(x_{j}){\rm e}^{-\frac{|x|^{2}}{2}},
$$
which form an orthogonal system in $\Sp=L^{2}(\mathbb R^{n})$. Here, $P_{m}(\cdot)$ is the $m$--th order Hermite polynomial, and
$$
P_{m}(t)=c_{m}{\rm e}^{\frac{|t|^{2}}{2}}\left(t-\frac{d}{dt}\right)^{m}{\rm e}^{-\frac{|t|^{2}}{2}},
$$
where $t\in\mathbb R$, and
$$
c_{m}=2^{-m/2}(m!)^{-1/2}\pi^{-1/4}.
$$
For more details on the derivation of these formulae see e.g.  \cite{NiRo:10}.

Let us also calculate
\begin{align*}
\int_{\mathbb R^{n}}|(f\ast_{\H}
g)(x)|dx&\leq\int_{\mathbb R^{n}}\sum\limits_{\xi\in\ind}|\widehat{f}(\xi)\widehat{g}(\xi)|\,|e_{\xi}(x)|dx\\
&\leq\sum\limits_{\xi\in\ind}|\widehat{f}(\xi)|\,|\widehat{g}(\xi)|\, \|e_{\xi}\|_{L^{1}}\\
&\leq C\|f\|_{L^{2}}\,\|g\|_{L^{2}}\sup\limits_{\xi\in\ind}\|e_{\xi}\|_{L^{1}}\\
&\leq C\|f\|_{L^{2}}\,\|g\|_{L^{2}}.
\end{align*}
Hence we obtain the following estimate for the convolution:
if $f,g\in  L^{2}(\mathbb R^{n})$, then $f\ast_{\H} g\in L^{1}(\mathbb R^{n})$ with
\begin{equation}\label{EQ:ho-conv}
\|f \ast_{\H} g\|_{L^1}\leq C \|f\|_{L^2}\|g\|_{L^2},
\end{equation}
for some constant $C$.

\end{document}